\title{A capacity approach to box and packing dimensions of projections and other images}
\author{K.J. Falconer\\
\small{{\it Mathematical Institute,  
University of St~Andrews, North Haugh, St~Andrews,}} \\
\small{{\it Fife, KY16~9SS, Scotland }}} 
\date{}
\def\rn{\mathbb{R}^n}
\newcommand{\E}{\mathbb{E}}
\renewcommand{\P}{\mathbb{P}}
\newcommand{\fw}{f_\omega}
\newcommand\ubd{\overline{\mbox{\rm dim}}_{\rm B}} 
\newcommand\lbd{\underline{\mbox{\rm dim}}_{\rm B}} 
\newcommand\bdd{\mbox{\rm dim}_{\rm B}}
\newcommand\pkd{\mbox{\rm dim}_{\rm P}} 
\newcommand\hdd{\mbox{\rm dim}_{\rm H}} 
\newcommand{\be}{\begin{equation}} 
\newcommand{\ee}{\end{equation}} 
 \newtheorem{theo}{Theorem}[section]
 \newtheorem{cor}[theo]{Corollary}
 \newtheorem{lem}[theo]{Lemma}
 \newtheorem{prop}[theo]{Proposition}
\begin{document}
\maketitle

\begin{abstract}
\noindent Dimension profiles were introduced in \cite{FH2, How} to provide formulae for the box-counting and packing dimensions of the orthogonal projections of a set $E \subset \rn$ or a measure on $\rn$ onto almost all $m$-dimensional subspaces. The original definitions of dimension profiles were somewhat awkward and not easy to work with. Here we rework this theory with an alternative definition of dimension profiles in terms of capacities of $E$  with respect to certain kernels, and this leads to the box-counting dimensions of projections and other images of sets relatively easily. We also discuss other uses of the profiles, such as the information they give on exceptional sets of projections and dimensions of images under certain stochastic processes. We end by relating this approach to packing dimension.
 \end{abstract}

\section{Introduction}
\setcounter{equation}{0}
\setcounter{theo}{0}

The relationship between the Hausdorff dimension of a set $E \subset \rn$ and its orthogonal projections $\pi_V(E)$ onto  subspaces $V\in G(n,m)$, where $G(n,m)$ is the Grassmanian of $m$-dimensional subspaces of $\rn$ and $\pi_V: \rn \to V$ denotes orthogonal projection, has been studied since the foundational work of Marstrand \cite{Mar} for $G(2,1)$ extended to general $G(n,m)$ by Mattila \cite{Mat4}. They showed that for Borel $E \subset \rn$ 
\be\label{marmat}
\hdd \pi_V(E) \ = \ \min\{\hdd E, m\}  
\ee
for almost all $m$-dimensional subspaces $V$ with respect to the natural invariant probability measure $\gamma_{n,m}$ on $G(n,m)$, where $\hdd$ denotes Hausdorff dimension. Kaufman \cite{Kau,KM} gave a proof of these results using capacities and this has become the standard approach for  such problems.
Numerous generalisations, specialisations and consequences of these projection results have been developed, see \cite{FFJ,Mat5} for recent surveys.

It is natural to seek projection results for other notions of dimension. However,  examples show that the direct analogue of \eqref{marmat} is not valid for box-counting (Minkowski) dimension or packing dimension, though there are non-trivial lower bounds on the dimensions of the projections, see  \cite{FH,FM,Jar}. That the box-counting and packing  dimensions of the projections of a Borel set $E$ are constant for almost all subspaces $V \in G(n,m)$ was established in \cite{FH2,How} but this constant value, given by  a {\it dimension profile} of $E$ was specified  somewhat indirectly. For packing dimensions this is given in terms of the suprema of dimension profiles of measures supported by $E$ which in turn are given by critical parameters for certain almost sure pointwise limits  \cite{FH2}. The approach in \cite{How} defines box-counting dimension profiles in terms of weighted packings subject to constraints.  

These definitions of dimension profiles  are, frankly, messy, indirect and unappealing. In an attempt to make the concept more attractive, we present here an alternative approach to box-counting dimension profiles and their application to projections and other images in terms of capacities with respect to certain kernels. Using  simple properties of equilibrium measures leads to a direct and more natural formulation of  dimension profiles and the derivation of projection properties. 

Thus we will in \eqref{dimpro} define the  $s$-{\it box dimension profile} of $E\subset \mathbb{R}^n$ for $s> 0$ as
$$\bdd^s E\  =\   \lim_{r\to 0} \frac{\log  C_r^s(E)}{-\log r} $$
 where $C_r^s(E)$ is the capacity of $E$ with respect to a continuous kernel \eqref{ker}. (More exactly we will use upper and lower dimension profiles to correspond  to upper and lower limits should the limit not exist.) We will show 
in Section \ref{sec2.2} that if $s\geq n$ then $\bdd^s E$ is just the usual box-counting dimension of $E$, but in Section \ref{sec2.4} that if $1 \leq m \leq n-1$ then $\bdd^m E$ equals the box-counting dimension of the projection of $E$ onto almost every $m$-dimensional subspace of $\mathbb{R}^n$. In this way, the dimension profile $\bdd^s E$ may be thought of as the dimension of $E$ when regarded from an $s$-dimensional viewpoint. Analogously,  $\hdd^s E= \min\{\hdd E, s\}$ could be interpreted as  the Hausdorff dimension profile for Marstrand's result \eqref{marmat}. 

Since their introduction, dimension profiles have become a key tool in investigating the packing and box dimensions of the images of sets under random processes, see Section 2.5 and, for example, \cite{SX,Xi}.

\section{Capacities and box-counting dimensions}
\setcounter{equation}{0}
\setcounter{theo}{0}

Throughout this section we will consider images of a non-empty compact set  $E$; since box dimensions are not defined for unbounded sets or for the empty set, and also the box dimensions of a set equal those of its closure, we lose little by doing so. We will assume without always saying so explicitly that $E$ is non-empty. 

\subsection{Capacity and minimum energy}
Capacity arguments using potential kernels of the form $\phi(x) = |x|^{-s}$ are widely used in Hausdorff dimension arguments, see for example \cite{Kau,KM,Mat,Mat2}. For box-counting dimensions, another class of kernels turns out to be useful.
Let $s>0$ and $r>0$. Throughout this paper we use the potential kernels 
\be\label{ker}
 \phi_r^s(x)= \min\Big\{ 1, \Big(\frac{r}{|x|}\Big)^s\Big\} \qquad (x\in \rn)
\ee
which were introduced in \cite{FH,FM}.
Let $E\subset \rn$ be non-empty and compact and let   ${\mathcal M}(E)$ denote the set of Borel probability measures supported by $E$. We define the {\em energy} of  $\mu  \in {\mathcal M}(E)$ with respect to the kernel $\phi_r^s$ by
$$\int\int \phi_r^s(x-y)d\mu(x)d\mu(y),$$
and the {\em potential}  of $\mu$  at $x\in \rn$ by 
$$ \int \phi_r^s(x-y)d\mu (y).$$
The {\em capacity} $C_r^s(E) $ of $E$  is the reciprocal of the minimum energy achieved by probability measures on $E$, that is 
\be\label{minen}
\frac{1}{C_r^s(E)}\  = \ \inf_{\mu \in {\mathcal M}(E)}\int\int \phi_r^s(x-y)d\mu(x)d\mu(y);
\ee
note that since  $\phi_r^s$ is continuous and $E$ is  compact, $0<C_r^s(E)<\infty$. (For a non-closed bounded set the capacity is taken to equal that of its closure.)

The following energy-minimising property is standard in potential theory, but it is key for our needs, so we give the proof which is particularly simple for continuous kernels.

\begin{lem}\label{equilib}
Let $E\subset \rn$ be compact and $s>0$ and $r>0$.  Then the infimum in \eqref{minen} is attained by a measure $\mu_0  \in {\mathcal M}(E)$. Moreover 
\be\label{pot}
 \int \phi_r^s(x-y)d\mu_0  (y)\ \geq\  \frac{1}{C_r^s(E)}
\ee
for all $x\in E$, with equality for $\mu_0 $-almost all $x \in E$.
\end{lem}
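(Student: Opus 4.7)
The plan is to prove all three conclusions by combining weak-$*$ compactness with a standard convex perturbation argument. The fact that the kernel has been clipped at $1$ makes $\phi_r^s$ continuous and bounded on all of $\mathbb{R}^n$ (with $\phi_r^s(0)=1$), which simplifies every step.

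First, for existence of a minimiser, I would observe that $(x,y)\mapsto \phi_r^s(x-y)$ is continuous on the compact set $E\times E$, so by the definition of weak-$*$ convergence applied to the product measure, the energy functional
$$ \mu \longmapsto \iint \phi_r^s(x-y)\,d\mu(x)\,d\mu(y) $$
is continuous on $\mathcal{M}(E)$ in the weak-$*$ topology. Since $E$ is compact, $\mathcal{M}(E)$ is weak-$*$ compact by Banach--Alaoglu, so the infimum is attained by some $\mu_0 \in \mathcal{M}(E)$.

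Next, to prove the potential inequality \eqref{pot}, I would write $I = 1/C_r^s(E)$ and $U(x)=\int \phi_r^s(x-y)\,d\mu_0(y)$, and argue by contradiction. Suppose $U(x_0) < I$ for some $x_0 \in E$. Form the competitor $\mu_t = (1-t)\mu_0 + t\,\delta_{x_0} \in \mathcal{M}(E)$ for $t \in [0,1]$, and expand
$$ \iint \phi_r^s(x-y)\,d\mu_t(x)\,d\mu_t(y) \ = \ (1-t)^2 I + 2t(1-t)\,U(x_0) + t^2, $$
using $\phi_r^s(0)=1$. Differentiating at $t=0$ gives $2(U(x_0)-I) < 0$, so for small $t>0$ the energy of $\mu_t$ is strictly less than $I$, contradicting the minimality of $\mu_0$. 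This yields $U(x) \geq I$ for every $x \in E$.

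Finally, for the $\mu_0$-a.e.\ equality, I would integrate the previous inequality against $\mu_0$: Fubini gives $\int U(x)\,d\mu_0(x) = I$ while $U \geq I$ on $E \supset \mathrm{supp}\,\mu_0$, so equality of the integral forces $U(x) = I$ for $\mu_0$-almost every $x$. I do not anticipate any real obstacle here; the one thing worth checking is that the quadratic expansion in $t$ is valid, which follows immediately from boundedness of $\phi_r^s$, and that no regularisation near the origin is needed thanks to the clipping in the definition \eqref{ker}.
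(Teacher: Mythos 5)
Your proposal is correct and follows essentially the same route as the paper: weak-$*$ compactness of ${\mathcal M}(E)$ together with continuity of the energy functional for the existence of $\mu_0$, the convex perturbation $(1-t)\mu_0+t\delta_{x_0}$ with a first-order expansion in $t$ for the potential lower bound, and integration of the potential against $\mu_0$ for the almost-everywhere equality. No gaps.
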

\begin{proof}
Let $\mu_k \in {\mathcal M}(E)$ be such that $\int\int \phi_r^s(x-y)d\mu_k(x)d\mu_k(y) \to \gamma := 1/C_r^s(E)$. Then $\mu_k$ has a  subsequence that is weakly convergent to some $\mu_0  \in {\mathcal M}(E)$; since $\phi_r^s(x-y)$ is continuous the infimum is attained.

Now suppose that $\int \phi_r^s(w-y)d\mu_0(y)\leq \gamma - \epsilon$ for some $w\in E$ and $\epsilon >0$. Let $\delta_w$ be the unit point mass at $w$ and for $0<\lambda<1$ let $\mu_\lambda =\lambda \delta_w + (1-\lambda)\mu_0  
\in {\mathcal M}(E)$. Then
\begin{eqnarray*}
\int\int \phi_r^s(x-y)d\mu_\lambda(x)d\mu_\lambda(y)
&=& \lambda^2 \phi_r^s(w-w) +2\lambda(1-\lambda)\int \phi_r^s(w-y)d\mu_0 (y)\\
&&+\ (1-\lambda)^2 \int\int \phi_r^s(x-y)d\mu_0 (x)d\mu_0 (y)\\
&\leq& \lambda^2 +2\lambda(1-\lambda)(\gamma - \epsilon) +(1-\lambda)^2\gamma\\
&=& \gamma - 2\lambda\epsilon + O( \lambda^2),
\end{eqnarray*}
which, on taking $\lambda$ small, contradicts that $\mu_0 $ minimises the energy integral.
Thus  inequality \eqref{pot} is satisfied for all $x\in E$, and equality for $\mu_0 $-almost all $x$ is immediate from \eqref{minen}. 
\end{proof}

\subsection{Capacities and box-counting numbers}\label{sec2.2}

For a non-empty compact  $E\subset \rn$,  let $N_r(E)$ be the minimum number of sets of diameter $r$ that can cover $E$. Recall that the {\em lower} and {\em upper box-counting dimensions} of $E$ are defined by 
\be\label{boxdims}
\lbd E\ =\ \varliminf_{r\to 0} \frac{\log  N_r(E)}{-\log r} 
\quad \mbox{ and }\quad  \ubd E\ = \ \varlimsup_{r\to 0} \frac{\log  N_r(E)}{-\log r}, 
\ee
with the {\em box-counting dimension} given by the common value if the limit exists, see for example \cite{Fa}.
The aim of this section is to prove Corollary  \ref{capcor},  that the capacity $C_r^s(E)$ and the covering number $N_r(E)$ are comparable provided that $s\geq n$. Note that this is not necessarily the case if $0\leq s<n$ and we will see in Subsection \ref{projdims} that it is this disparity that leads to formulae for the box dimensions of projections. The next two lemmas obtain lower and upper bounds for $N_r(E)$.

\begin{lem}\label{genbound}
Let $E\subset \rn$ be compact and let $r>0$. 
Suppose that $E$ supports a measure  $\mu\in {\mathcal M}(E)$ such that for some $\gamma>0$
\be\label{est0}
(\mu \times \mu)\big\{(x,y): |x-y| \leq r\big\}\ \leq\ \gamma.
\ee
Then 
\begin{equation}\label{concl1}
N_{r}(E)\  \geq\  \frac{c_n}{\gamma},
\end{equation}
where the constant $c_n$ depends only on $n$. In particular \eqref{concl1} holds if, for some $s>0$,
\be\label{est0a}
\int\int \phi_r^s(x-y)d\mu(x)d\mu(y)\ \leq\ \gamma.
\ee
\end{lem}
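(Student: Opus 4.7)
The plan is to derive the lower bound on $N_r(E)$ by a second-moment argument (Cauchy--Schwarz applied to a disjointified cover), and then deduce the ``in particular'' statement by observing that the kernel $\phi_r^s$ already majorises the indicator of the diagonal neighbourhood $\{|x-y|\leq r\}$.

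First, I would take a cover $\{U_i\}_{i=1}^{N}$ of $E$ by sets of diameter at most $r$, with $N=N_r(E)$, and replace it by a pairwise disjoint cover by setting $V_i := U_i \setminus \bigcup_{j<i} U_j$. Each $V_i \subseteq U_i$ still has diameter at most $r$, the $V_i$ are disjoint, and they cover $E$, so $\sum_i \mu(V_i) \geq 1$. The key point is that, because the $V_i$ are disjoint and each $V_i\times V_i$ sits inside $\{(x,y):|x-y|\leq r\}$, the products $V_i\times V_i$ form a disjoint family of subsets of this diagonal neighbourhood. Hypothesis \eqref{est0} then yields
\begin{equation*}
\sum_{i=1}^{N} \mu(V_i)^2 \;=\; \sum_{i=1}^{N} (\mu\times\mu)(V_i\times V_i) \;\leq\; (\mu\times\mu)\{(x,y):|x-y|\leq r\} \;\leq\; \gamma.
\end{equation*}
Cauchy--Schwarz gives $1 \leq \bigl(\sum_i \mu(V_i)\bigr)^2 \leq N\sum_i \mu(V_i)^2 \leq N\gamma$, i.e.\ $N_r(E) \geq 1/\gamma$. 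This already proves \eqref{concl1} with $c_n = 1$; a dimensional constant $c_n$ only appears if one prefers to work with coverings by balls, cubes of side $\leq r/\sqrt n$, or $r$-separated sets, and converts between these standard box-counting quantities (each such conversion costs a multiplicative factor depending only on $n$).

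For the ``in particular'' clause, I would simply note from the definition \eqref{ker} of the kernel that $\phi_r^s(x-y) = 1$ whenever $|x-y|\leq r$. Hence
\begin{equation*}
(\mu\times\mu)\{(x,y):|x-y|\leq r\} \;\leq\; \int\!\!\int \phi_r^s(x-y)\,d\mu(x)d\mu(y) \;\leq\; \gamma,
\end{equation*}
so hypothesis \eqref{est0a} implies \eqref{est0} and the first part of the lemma applies.

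There is no substantial obstacle here; the only subtle point is that one must disjointify the cover before summing, since applying the hypothesis term-by-term to a merely covering (possibly overlapping) family only yields $\mu(U_i)\leq \sqrt{\gamma}$ and hence the much weaker bound $N\geq 1/\sqrt{\gamma}$. Arranging truly disjoint $V_i$ is what turns the hypothesis into a genuine second-moment bound and delivers $N\gamma\geq 1$.
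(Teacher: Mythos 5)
Your proof is correct and takes essentially the same route as the paper's: a Cauchy--Schwarz (second-moment) argument over a disjoint family of diameter-$\leq r$ pieces, each of whose Cartesian squares lies in $\{(x,y):|x-y|\leq r\}$, followed by the observation that $\phi_r^s$ dominates the indicator of that set. The only difference is that you disjointify an optimal cover (one should replace each $U_i$ by its closure first so the $V_i$ are Borel) whereas the paper uses half-open mesh cubes of diameter $r$ and then converts cube counts to covering numbers, which is why your constant is $c_n=1$ while the paper's is $(2\sqrt{n})^{-n}$.
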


\begin{proof}
Let ${\cal C}(E)$ be the set of half-open coordinate mesh cubes of diameter $r$ (i.e. of side length $r/\sqrt{n}$) that intersect $E$, and suppose that there are  $N'_{r}(E)$ such cubes.
By  Cauchy's inequality,
\begin{eqnarray*}
1\ =\ \mu(E)^2    &= & \bigg(\sum_{C\in {\cal C}(E)}\mu(C)\bigg)^2\\
& \leq &  N'_{r}(E) \sum_{C\in {\cal C}(E)}\mu(C) ^2\\
& = &  N'_{r}(E) \sum_{C\in {\cal C}(E)}(\mu \times \mu)\big\{(w, z)\in C\times C\big\}\\
& \leq &  N'_{r}(E)\, (\mu \times \mu) \big\{(w, z): |w-z| \leq r\big\}\\
& \leq &  N'_{r}(E)\,  \gamma\\
& \leq &  (2 \sqrt{n})^{n}\, N_{r}(E)\,  \gamma,
\end{eqnarray*}
noting that a set of diameter $r$ can intersect at most $(2 \sqrt{n})^{n}$ of the cubes of ${\cal C}(E)$.

Since $1_{B(0,r)}(x)\leq \phi_r^s(x)$,  inequality \eqref{est0a}  implies \eqref{est0}, to complete the proof.
\end{proof}

\begin{lem}\label{potbound}
Let $E\subset \rn$ be non-empty and compact and let $s>0$ and $r>0$. 
Suppose that $E$ supports a measure  $\mu\in {\mathcal M}(E)$ such that for some $\gamma>0$
\be\label{est2}
\int \phi_r^s(x-y)d{\mu} (y)\ \geq \ \gamma \qquad \mbox{ for all } x\in E.
\ee
Then
\be\label{est2c}
N_r(E)\  \leq \  
\left\{
\begin{array}{ll}
{\displaystyle \frac{c_{n,n}\lceil\log_2 ({\rm diam}E / r)+1\rceil}{\gamma } }& \mbox{ if } s=n    \\
{\displaystyle  \frac{c_{n,s}}{\gamma}}&  \mbox{ if } s>n   
\end{array}
\right.  ,
\ee
where $c_{n,s}$ depends only on $n$ and $s$.
\end{lem}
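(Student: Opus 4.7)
The plan is to select a maximal $(r/2)$-separated subset $\{x_1,\ldots,x_M\}\subset E$; by maximality the balls $B(x_i,r/2)$ cover $E$, and since each has diameter $r$ this gives $N_r(E)\leq M$. Hence it suffices to produce an upper bound on $M$.

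To exploit the hypothesis \eqref{est2}, I would apply it at each $x_i\in E$, sum over $i$, and interchange summation with integration to obtain
$$M\gamma \;\leq\; \sum_{i=1}^M \int \phi_r^s(x_i - y)\,d\mu(y) \;=\; \int \sum_{i=1}^M \phi_r^s(x_i - y)\,d\mu(y) \;\leq\; \sup_{y\in E}\sum_{i=1}^M \phi_r^s(x_i - y).$$
The task then reduces to bounding the kernel sum $\sum_i \phi_r^s(x_i - y)$ pointwise for $y\in E$, using only the $(r/2)$-separation of the $x_i$.

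For this pointwise bound I would split the sum according to the dyadic shells $A_k=\{x: r2^k<|x-y|\leq r2^{k+1}\}$ for $k\geq 0$, together with the inner ball $B(y,r)$. A standard volume-packing estimate using $(r/2)$-separation shows that the number of $x_i$ lying in any ball of radius $R$ centred at $y$ is at most $c_n(R/r)^n$; hence at most $c_n\,2^{n(k+1)}$ of the $x_i$ lie in $A_k$, and at most $c_n$ lie in $B(y,r)$. Since $\phi_r^s\leq 2^{-ks}$ on $A_k$ and $\phi_r^s\leq 1$ everywhere, the contribution of $A_k$ is at most $c_{n,s}\,2^{k(n-s)}$, and the inner ball contributes a constant depending only on $n$.

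Summing yields a geometric series in $2^{k(n-s)}$. For $s>n$ this converges to a constant depending only on $n$ and $s$, yielding $\sum_i\phi_r^s(x_i-y)\leq c_{n,s}$ and thus $M\leq c_{n,s}/\gamma$. For $s=n$ each term equals $1$, so I would exploit the fact that both $y$ and the $x_i$ lie in $E$: then $|x_i-y|\leq \mathrm{diam}\,E$, and $A_k$ is empty once $r2^k>\mathrm{diam}\,E$. The sum thus truncates at $k\sim\log_2(\mathrm{diam}\,E/r)$, giving the claimed logarithmic factor. The only delicate bookkeeping is tracking constants in the annular packing step and handling this truncation cleanly using $y\in E$; there is no real conceptual obstacle.
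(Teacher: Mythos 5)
Your argument is correct and yields the stated bounds with the right dependence on $n$, $s$ and the logarithmic factor at $s=n$. It is, however, organised as a Fubini-dual of the paper's proof rather than a copy of it. The paper fixes each centre $x_i$ (taken from a maximal disjoint family of $r$-balls rather than a maximal $(r/2)$-separated set, an inessential difference), expands the potential $\int\phi_r^s(x_i-y)\,d\mu(y)$ into dyadic annuli \emph{around $x_i$} to get $\sum_k 2^{-ks}\mu(B(x_i,2^kr))$, sums over $i$, and then uses a pigeonhole over the scales $k$ (dividing by the number of scales when $s=n$, comparing with a geometric series when $s>n$) to isolate a single scale at which the bounded-overlap estimate $\sum_i\mu(B(x_i,2^kr))\le 2^{(k+1)n}$ closes the argument. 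You instead interchange the sum and the integral and bound the kernel sum $\sum_i\phi_r^s(x_i-y)$ uniformly in $y$ by decomposing into dyadic shells \emph{around $y$} and counting separated points per shell; the two are linked by the identity $\sum_i\mu(B(x_i,\rho))=\int\#\{i:x_i\in B(y,\rho)\}\,d\mu(y)$. Your version avoids the pigeonhole step entirely and is arguably the more transparent of the two, since the case split $s=n$ versus $s>n$ appears only in whether the geometric series $\sum_k 2^{k(n-s)}$ must be truncated at $k\approx\log_2(\mathrm{diam}\,E/r)$ (using $x_i,y\in E$) or converges outright; the only points to make explicit in a write-up are the packing constant $\#\{i:x_i\in B(y,R)\}\le(4R/r+1)^n$ and the fact that $\sup$ can be taken over $\operatorname{supp}\mu\subseteq E$.
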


\begin{proof}
 Write $M = {\rm diam}E$. For all $x\in E$,
\begin{eqnarray*}
 \int \phi_r^s(x-y)d{\mu} (y)
&\leq& \mu(B(x,r)) +\sum_{k=0}^{\lceil\log_2 (M/r)-1\rceil} \int _{B(x,2^{k+1}r)\setminus  B(x,2^{k}r)}2^{-ks} d\mu(y)\\
&\leq& \mu(B(x,r)) +\sum_{k=0}^{\lceil\log_2 (M/r)-1\rceil} 2^{-ks} \mu(B(x,2^{k+1}r)) \\
&\leq& 2^s\sum_{k=0}^{\lceil\log_2 (M/r)\rceil} 2^{-ks} \mu(B(x,2^{k}r)).
\end{eqnarray*}
Let $B(x_i, r),\  i = 1,\ldots, N'_r(E)$, be a maximal collection of disjoint balls of radii $r$ with $x_i \in E$, where  in this proof $N'_r(E)$ denotes this maximum number.
From \eqref{est2}, for each $i$,
$$ \gamma\ \leq\   \int \phi_r^s(x_i-y)d{\mu} (y)\ \leq2^s\ \sum_{k=0}^{\lceil\log_2 (M/r)\rceil} 2^{-ks} \mu(B(x_i,2^{k}r)).$$
Summing over the $x_i$,
$$N'_r(E)\gamma\  \leq   \sum_{k=0}^{\lceil\log_2 (M/r)\rceil} 2^{s(1-k)} \sum_{i=1}^{N'_r(E)}  \mu(B(x_i,2^{k}r)),$$
so, for some $k$ with $0\leq k \leq\lceil \log_2 (M/r)\rceil$,
\be\label{est3}
2^{s(1-k)}\sum_{i=1}^{N'_r(E)}  \mu(B(x_i,2^{k}r))  \ \geq\
\left\{
\begin{array}{ll}
 N'_r(E)\gamma\big/  \lceil\log_2 (M/r)+1\rceil   & \mbox{ if } s=n    \\
 N'_r(E)\gamma\, 2^{k(n-s)}  (1-2^{n-s}) &  \mbox{ if } s>n   
\end{array}
\right. ,
\ee
the case of $s>n$ coming from comparison with a geometric series. For all $x\in E$ a volume comparison using the disjointedness of the balls $B(x_i,r)$ shows that at most $ (2^k +1)^n\leq 2^{(k+1)n}$ of the $x_i$ lie in $B(x,2^{k}r)$. Consequently $x$ belongs to at most $2^{(k+1)n}$ of the $B(x_i,2^{k}r)$. Thus 
\be\label{est4}
\sum_{i=1}^{N'_r(E)}  \mu(B(x_i,2^{k}r))\ \leq\ 2^{(k+1)n}\mu(E)\ 
=\ 2^{n+s}2^{-s(1-k)}2^{k(n-s)}\ \leq \ 2^{n+s} 2^{-s(1-k)},
\ee
using that $s\geq n$.
Inequality  \eqref{est2c} now  follows from \eqref{est3},  \eqref{est4} and the fact that $N_r(E)\leq a_n N'_r(E)$ where $a_n$ is the minimum number of balls  in $\mathbb{R}^n$ of diameter $1$ that can cover a ball of radius 1.
\end{proof}

\begin{cor}\label{capcor}
Let $E\subset \rn$ be non-empty and compact and let $r>0$. Then
\be\label{capineq}
c_n C^s_r(E)\  \leq\ N_r(E)\  \leq \  
\left\{
\begin{array}{ll}
{\displaystyle c_{n,n}\lceil\log_2 ({\rm diam}E / r)+1\rceil\  C^s_r(E) }& \mbox{ if } s=n    \\
{\displaystyle  c_{n,s}\ C^s_r(E)}&  \mbox{ if } s>n   
\end{array}
\right.  .
\ee
\end{cor}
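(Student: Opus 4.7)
The plan is to apply the equilibrium measure supplied by Lemma \ref{equilib} as the test measure $\mu$ in both Lemma \ref{genbound} and Lemma \ref{potbound}, so that the corollary falls out as a direct two-line consequence of those three lemmas.

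First, for the lower bound on $N_r(E)$, I would take $\mu = \mu_0$, the energy-minimising measure in $\mathcal{M}(E)$ provided by Lemma \ref{equilib}. By definition of $C_r^s(E)$ in \eqref{minen}, we have
\[
\int\int \phi_r^s(x-y)\,d\mu_0(x)\,d\mu_0(y)\ =\ \frac{1}{C_r^s(E)},
\]
so hypothesis \eqref{est0a} of Lemma \ref{genbound} holds with $\gamma = 1/C_r^s(E)$. The conclusion \eqref{concl1} of that lemma then immediately gives $N_r(E) \geq c_n C_r^s(E)$, which is the left-hand inequality of \eqref{capineq}.

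Second, for the upper bound, I would again use $\mu_0$ but invoke the pointwise potential bound \eqref{pot} from Lemma \ref{equilib}, namely
\[
\int \phi_r^s(x-y)\,d\mu_0(y)\ \geq\ \frac{1}{C_r^s(E)}\qquad \text{for all } x \in E.
\]
This is precisely hypothesis \eqref{est2} of Lemma \ref{potbound} with $\gamma = 1/C_r^s(E)$, so \eqref{est2c} yields the right-hand inequality of \eqref{capineq} in each of the two cases $s = n$ and $s > n$, with the same constants $c_{n,n}$ and $c_{n,s}$.

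There is essentially no obstacle here: the work was done in setting up the equilibrium measure with both its minimum-energy property and its pointwise potential lower bound, so that the same $\mu_0$ simultaneously witnesses both directions. The only thing one should note is that the hypothesis $s \geq n$ in Lemma \ref{potbound} is exactly what is needed for the upper inequality in the stated ranges; no extra estimate or combination is required.
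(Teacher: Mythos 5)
Your proof is correct and is essentially identical to the paper's own argument: the paper likewise takes the equilibrium measure from Lemma \ref{equilib}, feeds its minimal energy $1/C_r^s(E)$ into Lemma \ref{genbound} for the lower bound and its pointwise potential bound \eqref{pot} into Lemma \ref{potbound} for the upper bound. Nothing is missing.
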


\begin{proof}
By Lemma \ref{equilib}  we may find $\mu\in {\mathcal M}(E)$ satisfying \eqref{minen} and \eqref{pot}, so the conclusion follows immediately from Lemmas \ref{genbound} and \ref{potbound}. 
\end{proof}

\subsection{Dimension profiles and box-counting dimensions of images}\label{projdims}
For $s> 0$ we define the {\it lower} and {\it upper}  $s$-{\it box dimension profiles} of $E\subseteq \mathbb{R}^n$ by
\be\label{dimpro}
\lbd^s E\  =\   \varliminf_{r\to 0} \frac{\log  C_r^s(E)}{-\log r} \quad \mbox{and}\quad  \ubd^s E\   =\   \varlimsup_{r\to 0} \frac{\log  C_r^s(E)}{-\log r}.
\ee
When $s\geq n$  equality of the box dimensions and the dimension profiles is immediate from  Corollary \ref{capcor} . 

\begin{cor}\label{sgeqn}
Let $E\subset \mathbb{R}^n$. If $s\geq n$ then 
$$\lbd^s E\   =\   \lbd E  \quad \mbox{and}\quad  \ubd^s E\   = \  \ubd E.$$
\end{cor}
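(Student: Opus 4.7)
The plan is to read the corollary off directly from the two-sided comparison
$$c_n\, C^s_r(E)\ \leq\ N_r(E)\ \leq\ K(r)\, C^s_r(E)$$
provided by Corollary \ref{capcor}, where $K(r)=c_{n,s}$ when $s>n$ and $K(r)=c_{n,n}\lceil \log_2(\operatorname{diam} E/r)+1\rceil$ when $s=n$. First I would take logarithms throughout this sandwich and divide by $-\log r$, obtaining
$$\frac{\log c_n}{-\log r}+\frac{\log C^s_r(E)}{-\log r}\ \leq\ \frac{\log N_r(E)}{-\log r}\ \leq\ \frac{\log K(r)}{-\log r}+\frac{\log C^s_r(E)}{-\log r}.$$

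The next step is to check that the additive ``error'' terms $\log c_n/(-\log r)$ and $\log K(r)/(-\log r)$ both vanish as $r\to 0$. For $s>n$ this is trivial because the multiplicative constants are independent of $r$. For the borderline case $s=n$ the multiplier $K(r)$ contains an extra factor $\lceil \log_2(\operatorname{diam} E/r)+1\rceil$ whose logarithm is of order $\log\log(1/r)$; divided by $\log(1/r)$ this still tends to $0$, so the middle term and the two outer terms differ by an $o(1)$ quantity in either case.

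Finally, taking $\varliminf_{r\to 0}$ and $\varlimsup_{r\to 0}$ of the three-term inequality and appealing to the definitions \eqref{boxdims} of $\lbd E$, $\ubd E$ and \eqref{dimpro} of $\lbd^s E$, $\ubd^s E$ yields the claimed equalities. I do not expect a genuine obstacle here: the whole argument is essentially the observation that multiplicative constants, and even a multiplicative logarithmic factor, are absorbed into $o(\log(1/r))$ corrections when one passes to exponential growth rates. The one point worth verifying carefully is the $s=n$ case, where one must confirm that the $\log\log(1/r)$ correction in $K(r)$ is indeed of smaller order than $\log(1/r)$, so that the lower and upper limits of $(\log N_r(E))/(-\log r)$ really do coincide with those of $(\log C^s_r(E))/(-\log r)$.
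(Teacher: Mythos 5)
Your proposal is correct and is exactly the paper's argument: the paper's proof of this corollary simply cites the sandwich \eqref{capineq} together with the definitions \eqref{boxdims} and \eqref{dimpro}, and your write-up spells out the routine verification (including the point that the $\log\log(1/r)$ factor in the $s=n$ case is negligible after dividing by $\log(1/r)$). No gap.
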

\begin{proof}
This follows from \eqref{capineq}   and the definitions of box dimensions \eqref{boxdims} and of dimension profiles \eqref{dimpro}.
\end{proof}

The following theorem enables us to obtain upper bounds for the box dimensions of images of sets under Lipschitz or 
H\"{o}lder functions in terms of dimension profiles.

\begin{theo}\label{liplem}
Let $E\subset \mathbb{R}^n$ be compact and let $f:E\to \mathbb{R}^m$ be an $\alpha$-H\"{o}lder map satisfying
\be\label{lip}
|f(x)-f(y)| \ \leq\  c |x-y|^\alpha \qquad (x,y \in E), 
\ee
where $c>0$ and $0<\alpha \leq 1$. Then 
$$\lbd f( E)\  \leq \  \frac{1}{\alpha}\, \lbd^{m\alpha} E  \quad \mbox{and}\quad  \ubd f( E)\  \leq \   \frac{1}{\alpha}\, \ubd^{m\alpha}  E.$$
\end{theo}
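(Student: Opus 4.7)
The plan is to push forward an equilibrium measure on $E$ for the kernel $\phi_r^{m\alpha}$ and then invoke Lemma \ref{potbound} on the image. Fix $r > 0$, set $\rho := cr^\alpha$ (the scale to which $f$ maps $r$), let $\mu_0 \in \mathcal{M}(E)$ be an equilibrium measure for $\phi_r^{m\alpha}$ provided by Lemma \ref{equilib}, and let $\nu := f_* \mu_0 \in \mathcal{M}(f(E))$ be its push-forward to $\mathbb{R}^m$.

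The crucial step is the pointwise kernel comparison
\[
\phi_\rho^m\big(f(x)-f(y)\big)\ \geq\ \phi_r^{m\alpha}(x-y) \qquad (x,y \in E).
\]
Because $\phi_\rho^m$ is non-increasing in distance and $|f(x)-f(y)| \leq c|x-y|^\alpha$, it suffices to check that $\phi_\rho^m$ evaluated at distance $c|x-y|^\alpha$ equals $\phi_r^{m\alpha}(x-y)$. This is immediate on splitting into the cases $|x-y| \leq r$ (both values are $1$) and $|x-y| > r$ (both values equal $(r/|x-y|)^{m\alpha}$), using $\rho = cr^\alpha$.

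Integrating against $d\mu_0(y)$ and invoking the pointwise potential lower bound from Lemma \ref{equilib} yields, for every $u = f(x) \in f(E)$,
\[
\int \phi_\rho^m(u-v)\, d\nu(v)\ \geq\ \int \phi_r^{m\alpha}(x-y)\, d\mu_0(y)\ \geq\ \frac{1}{C_r^{m\alpha}(E)}.
\]
Lemma \ref{potbound} applied to $\nu$ on $f(E) \subset \mathbb{R}^m$ with exponent $s = m$ (the $s=n$ case of that lemma) then gives
\[
N_\rho\big(f(E)\big)\ \leq\ c_{m,m}\lceil \log_2({\rm diam}\, f(E)/\rho) + 1 \rceil\, C_r^{m\alpha}(E).
\]

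To conclude, take logarithms, divide by $-\log\rho$, and let $r \to 0$. Since $-\log\rho = -\log c + \alpha(-\log r)$, the ratio $-\log r/(-\log\rho) \to 1/\alpha$; the logarithmic pre-factor contributes $o(1)$; and applying $\varliminf$ and $\varlimsup$ to the resulting inequality produces the two claimed bounds on $\lbd f(E)$ and $\ubd f(E)$. The only non-routine step is the kernel comparison, but once one matches $\rho = cr^\alpha$, it becomes transparent; everything else is routine manipulation of the resulting estimate.
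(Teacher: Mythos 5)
Your proposal is correct and follows essentially the same route as the paper: compare the kernel $\phi^m$ on the image with $\phi^{m\alpha}$ on $E$ via the H\"{o}lder condition, push forward an equilibrium measure from Lemma \ref{equilib}, bound the covering number of $f(E)$ with the $s=n$ case of Lemma \ref{potbound}, and take logarithms. The only (cosmetic) difference is your choice of scales $\rho = cr^{\alpha}$ versus the paper's $r$ and $r^{1/\alpha}$, which makes your kernel comparison an exact identity rather than an inequality up to the constant $c_0=\min\{1,c^{-s}\}$.
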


\begin{proof}
From \eqref{ker} and \eqref{lip}, for $r>0$ and $s>0$,
\begin{align*}
 \phi_r^s(f(x)-&f(y)) =  \min\Big\{ 1, \Big(\frac{r}{|f(x)-f(y)|}\Big)^s\Big\}
\ \geq\  \min\Big\{ 1, \Big(\frac{r}{c|x-y|^\alpha}\Big)^s\Big\}\\
& =\  \min\Big\{ 1, c^{-s}\Big(\frac{r^{1/\alpha}}{|x-y|}\Big)^{s\alpha}\Big\}\ \geq\ c_0\phi_{r^{1/\alpha}}^{s\alpha}(x-y)
\end{align*}
 for $x,y\in E$, where $c_0= \min\{1,c^{-s}\}$.

For each $r$ we may, by Lemma \ref{equilib}, find a measure $\mu  \in {\mathcal M}(E)$ such that for all $x\in E$
$$\frac{1}{C^{m\alpha}_{r^{1/\alpha}}(E)} \leq \int \phi_{r^{1/\alpha}}^{m\alpha}(x-y)d\mu  (y)\ 
 \leq \ c_0^{-1} \int \phi_{r}^{m}(f(x)-f(y))d\mu (y)\ 
\leq\  c_0^{-1}\int \phi_r^m(f(x)-w)d(f\!\mu) (w),$$
where  $f\!\mu  \in {\mathcal M}(f(E))$ is  the image of the measure $\mu$ under $f$ defined by $\int g(w) d(f\!\mu) (w) =  \int g(f(x)) d\mu(x)$ for continuous $g$. Then for each $z=f(x) \in f(E)$,
$$ \int \phi_r^m(z-w)d(f\!\mu) (w)\ \geq\  \frac{c_0}{C^{m\alpha}_{r^{1/\alpha}}(E)}.$$ 
By Lemma \ref{potbound} 
$$N_r(f(E))\ \leq\   c_{m,m} \lceil\log_2({\rm diam}f(E) / r)+1\rceil c_0^{-1}C^{m\alpha}_{r^{1/\alpha}}(E),$$
so 
$$\frac{\log  N_r(f(E))}{-\log r}
\   \leq\  \frac{\log\big(c_{m,m} \lceil\log_2 ({\rm diam}f(E) / r +1\rceil c_0^{-1}\big)}{-\log r}
\ + \ \frac{\log C^{m\alpha}_{r^{1/\alpha}}(E)}{-\alpha \log r^{1/\alpha}}$$
and the conclusion follows  on taking lower and upper limits as $r\searrow 0$.
\end{proof}

The next theorem enables us to obtain almost sure lower bounds for box dimensions of images of sets. It is convenient to express the condition in probabilistic terms, though for our principal application to projections $f_\omega$ will simply be orthogonal projection from $\mathbb{R}^n$ onto the $m$-dimensional subspace $\omega \in G(n,m)$.

In Theorem \ref{mainnew}, $(\omega, {\mathcal F}, \P)$ is a probability space and $(\omega, x) \mapsto f_\omega(x)$ is a $\sigma({\mathcal F}\times {\mathcal B})$-measurable function where ${\mathcal B}$ denotes the Borel sets in $\mathbb{R}^n$.

\begin{theo}\label{mainnew}
Let $E\subset \mathbb{R}^n$ and let $s, \gamma>0$. Let $\{f_\omega: E \to \mathbb{R}^m,\ \omega \in \Omega\}$ be a family of continuous mappings such that for some $c>0$
\be\label{probbound}
\P(|\fw(x) - \fw(y)|\leq r)\ \leq\ c\,\phi_{r^\gamma}^s (x-y)
\qquad (x,y \in E, \ r>0).
\ee
Then, for $\P$-almost all $\omega\in \Omega$, 
\be\label{plb}
\lbd \fw( E)\  \geq \  \gamma\, \lbd^{s} E  \quad \mbox{and}\quad  \ubd \fw( E)\  \geq \   \gamma\, \ubd^{s}  E.
\ee
\end{theo}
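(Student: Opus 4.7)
My approach is to fix, for each $r>0$, an equilibrium measure $\mu_r\in{\mathcal M}(E)$ for the kernel $\phi_{r^\gamma}^s$ as provided by Lemma \ref{equilib}, push it forward to the image measure $f_\omega\mu_r\in{\mathcal M}(f_\omega(E))$, and combine the hypothesis \eqref{probbound} with Lemma \ref{genbound} to extract an almost sure lower bound for $N_r(f_\omega(E))$ in terms of $C_{r^\gamma}^s(E)$. The central random quantity will be
$$Y_r(\omega)\ :=\ C_{r^\gamma}^s(E)\,(f_\omega\mu_r \times f_\omega\mu_r)\bigl\{(z,w): |z-w| \le r\bigr\}.$$
The change-of-variable formula rewrites this product measure as $(\mu_r\times\mu_r)\{(x,y): |f_\omega(x)-f_\omega(y)| \le r\}$, so Fubini and \eqref{probbound}, together with the equilibrium identity $\int\!\!\int \phi_{r^\gamma}^s(x-y)\,d\mu_r(x)\,d\mu_r(y) = 1/C_{r^\gamma}^s(E)$ from Lemma \ref{equilib}, will give $\E[Y_r] \le c$ for every $r>0$.

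Next I would work along the dyadic scales $r_k = 2^{-k}$. For any $\epsilon>0$, Markov's inequality yields $\P(Y_{r_k} \ge r_k^{-\epsilon}) \le c\,r_k^{\epsilon}$, which is summable in $k$; the first Borel--Cantelli lemma then ensures $Y_{r_k} < r_k^{-\epsilon}$ eventually, almost surely. Intersecting over a countable sequence $\epsilon_j\downarrow 0$ produces an almost sure event on which $\limsup_k \log Y_{r_k}/(-\log r_k) \le 0$. Applying Lemma \ref{genbound} to $f_\omega\mu_{r_k}$ on $f_\omega(E)$ gives the deterministic bound $N_{r_k}(f_\omega(E)) \ge c_m\,C_{r_k^\gamma}^s(E)/Y_{r_k}$, so after taking logarithms, dividing by $-\log r_k$ and using $-\log r_k^\gamma = \gamma(-\log r_k)$,
$$\frac{\log N_{r_k}(f_\omega(E))}{-\log r_k}\ \ge\ \gamma\cdot\frac{\log C_{r_k^\gamma}^s(E)}{-\log r_k^\gamma}\ -\ \frac{\log Y_{r_k}}{-\log r_k}\ +\ o(1).$$
Taking lower and upper limits as $k\to\infty$ and using the a.s.\ bound on $\log Y_{r_k}/(-\log r_k)$ yields the analogues of \eqref{plb} along the dyadic sequence.

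The final step will be to pass from dyadic scales to arbitrary $r\to 0$. Because $r\mapsto\phi_r^s(x)$ is pointwise non-decreasing, $C_r^s(E)$ and $N_r(f_\omega(E))$ are both non-increasing in $r$, so a standard sandwich argument between consecutive dyadic scales (using $-\log r_{k+1}/(-\log r_k)\to 1$) shows that the liminf and limsup of the relevant log-ratios over all $r\to 0$ coincide with those along $(r_k)$. The main obstacle is the almost sure control of $Y_r$: since only $\E[Y_r]$ is uniformly bounded, one cannot expect $Y_r$ to remain small across a continuum of scales simultaneously, so the proof is forced to go through a countable family of scales and a countable sequence of error tolerances before the desired a.s.\ event can be assembled. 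Everything else is essentially a routine combination of Lemmas \ref{equilib} and \ref{genbound} with Fubini and Borel--Cantelli.
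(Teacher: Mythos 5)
Your argument is correct and follows essentially the same route as the paper: an (approximate) equilibrium measure from Lemma \ref{equilib}, Fubini together with \eqref{probbound} to bound the expected pair-correlation of the image measure by the energy $1/C_{r^\gamma}^s(E)$, a first-moment/Borel--Cantelli argument over a countable family of scales, and Lemma \ref{genbound} to convert back to covering numbers. The only immaterial difference is bookkeeping: you run Borel--Cantelli over all dyadic scales with tolerances $r_k^{-\epsilon}$ and then invoke the standard discretisation of the limits, whereas the paper, for the upper limit, sums a weighted series over a subsequence $r_i\le 2^{-i}$ witnessing $\ubd^s E>t'$.
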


\begin{proof}
First note that for all $\mu  \in {\mathcal M}(E)$,\begin{align}
\E\big(  (\fw\mu \times \fw\mu)&\big\{(w, z):  |w-z| \leq r\big\}\big) \nonumber\\
&=\ \E\big( (\mu \times \mu)\big\{(x, y):  |\fw x -\fw y| \leq r\big\}\big)\nonumber\\
&=\ \int \int\P\big\{ |\fw x -\fw y| \leq r\big\}d\mu(x)d\mu(y)\nonumber\\
&\leq \ c \int\int \phi_{r^\gamma}^s (x-y)d\mu(x)d\mu(y)\label{pipi}
\end{align}
using \eqref{probbound}.
If  $ \ubd^s E >t'>t>0 $ then $C_{r_i^\gamma}^s(E) \geq {r_i}^{-\gamma t'}$ for a sequence $r_i \searrow 0$, where we may assume that $0< r_i \leq 2^{-i}$ for all $i$. Then for each $i$ there is a measure $\mu_i  \in {\mathcal M}(E)$ such that 
$$\int\int \phi_{r_i^\gamma}^s(x-y)d\mu_i(x)d\mu_i(y)\leq {r_i}^{\gamma t'}.$$
Summing, and using \eqref{pipi},
\begin{align*}
\E\Big(\sum_{i=1}^\infty r_i^{-\gamma t}  & (\fw\mu_i \times \fw\mu_i)\big\{(w, z):  |w-z| \leq r_i\big\}\Big)\\
   &\leq\ c\sum_{i=1}^\infty r_i^{-\gamma t}\int\int \phi_{r_i^\gamma}^s (x-y)d\mu_i(x)d\mu_i(y)\\
  &\leq\ c\sum_{i=1}^\infty r_i^{\gamma (t'-t)} \ \leq \ c\sum_{i=1}^\infty 2^{-i\gamma(t'-t)}\ <\  \infty.
\end{align*}
Thus, for almost all $\omega$  there is a number $M_\omega< \infty$ such that 
$$ (\fw\mu_i \times \fw\mu_i)\big\{(w, z):  |w-z| \leq r_i\big\}\leq M_\omega r_i^{\gamma t}$$
for all $i$.
For such $\omega$ the image measure $\fw\mu_i$ is supported by $\fw (E)\subset \mathbb{R}^m$, so by Lemma \ref{genbound}, 
$$N_{r_i} (\fw E )\ \geq\  c_m M_\omega^{-1}  r_i^{-\gamma t}$$
 for all $i$. 
Hence $\varlimsup_{r\to 0} \log (N_{r}(\fw E)/-\log r )\geq \gamma t$. This is so for all $t< \ubd^s E$, so  
$\ubd \fw E \  \geq\  \gamma  \ubd^s E$ for almost all $\omega$.

The inequality of the lower dimensions for almost all $\omega$ follows in a similar manner, noting that for estimating the dimensions it is enough to take  $r =2^{-i}$ for $ i\in \mathbb{N}$ when taking the limits as $r\searrow 0$ in the definition of lower box dimension.
\end{proof}

\subsection{Box-counting dimensions of projections}\label{sec2.4}

The most basic application of the theorems of Section \ref{projdims} is to orthogonal projection of sets. Recall that $\gamma_{n,m}$ is the normalised invariant measure on $G(n,m)$, the Grassmanian of $m$-dimensional subspaces of $\mathbb{R}^n$.

\begin{theo}\label{main}
Let $E\subset \mathbb{R}^n$ be compact. Then  for all $V\in G(n,m)$
\be\label{dbin}
\lbd \pi_V E \  \leq\   \lbd^m E  \quad \mbox{and}\quad  \ubd \pi_V E \  \leq\   \ubd^m E,
\ee
and for $\gamma_{n,m}$-almost all $V\in G(n,m)$
\be\label{dbeq}
\lbd \pi_V E \  =\   \lbd^m E  \quad \mbox{and}\quad  \ubd \pi_V E \  =\   \ubd^m E.
\ee
\end{theo}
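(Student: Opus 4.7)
The plan is to apply the two general results of Section \ref{projdims} to the specific family of orthogonal projections $\{\pi_V : V \in G(n,m)\}$. The upper bounds in \eqref{dbin} fall out immediately from Theorem \ref{liplem} with $\alpha = 1$, since $\pi_V : \mathbb{R}^n \to V \cong \mathbb{R}^m$ is $1$-Lipschitz; there is nothing more to do for this half. The almost sure lower bounds in \eqref{dbeq} should then come from Theorem \ref{mainnew}, taking $(\Omega, \mathcal{F}, \P) = (G(n,m), \mathcal{B}(G(n,m)), \gamma_{n,m})$, $f_V = \pi_V$, $s = m$, and $\gamma = 1$. Combining the two inequalities gives equality $\gamma_{n,m}$-a.e., which is exactly \eqref{dbeq}.

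The only genuine work is verifying the hypothesis \eqref{probbound} of Theorem \ref{mainnew} in this setting, namely
$$\gamma_{n,m}\{V \in G(n,m) : |\pi_V x - \pi_V y| \leq r\} \ \leq \ c\,\phi_r^{m}(x - y) \qquad (x,y \in \mathbb{R}^n,\ r>0),$$
for some constant $c = c(n,m)$. By the rotational invariance of $\gamma_{n,m}$ the left-hand side depends only on $|x - y|$, so it suffices to show that for every unit vector $u \in \mathbb{R}^n$,
$$\gamma_{n,m}\{V : |\pi_V u| \leq t\} \ \leq \ c\min\{1, t^m\} \qquad (t>0).$$
Writing $t = r/|x-y|$ (and noting the bound is trivial when $r \geq |x-y|$) then yields the required inequality with the kernel $\phi_r^m$.

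This estimate is classical: parameterising $V$ as the row space of a random $m \times n$ matrix with orthonormal rows, the squared length $|\pi_V u|^2$ follows a $\mathrm{Beta}(m/2, (n-m)/2)$ distribution, so its density near $0$ behaves like $t^{m-1}$ and the distribution function is $O(t^m)$. Alternatively one can observe that $\pi_V u$ takes values in the unit ball of $V$, and use a change of variables on $G(n,m)$ to compare $\gamma_{n,m}\{V : |\pi_V u| \leq t\}$ with the $m$-dimensional Lebesgue measure of a ball of radius $t$.

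The main \emph{conceptual} obstacle — really the only one — is checking this Grassmannian tail bound; once in hand, the rest is a direct invocation of the abstract theorems. I would expect to cite Mattila \cite{Mat4} or a standard reference on the Grassmannian for the Beta-distribution identity rather than derive it, since it is precisely the same estimate that underlies Kaufman's capacity proof of Marstrand's theorem \eqref{marmat}. After verifying $(*)$, Theorem \ref{mainnew} gives $\lbd \pi_V E \geq \lbd^m E$ and $\ubd \pi_V E \geq \ubd^m E$ for $\gamma_{n,m}$-a.a.\ $V$, and combining with \eqref{dbin} completes the proof.
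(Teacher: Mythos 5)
Your proposal is correct and follows essentially the same route as the paper: the upper bounds via Theorem \ref{liplem} with $\alpha=1$, the almost-sure lower bounds via Theorem \ref{mainnew} with $\Omega=G(n,m)$, $\P=\gamma_{n,m}$, $s=m$, $\gamma=1$, with the only substantive step being the Grassmannian tail estimate $\gamma_{n,m}\{V:|\pi_V(x-y)|\le r\}\le c\,\phi_r^m(x-y)$, which the paper likewise treats as a standard fact (citing \cite[Lemma 3.11]{Mat} and sketching a spherical-tube area comparison rather than your Beta-distribution formulation, but these are the same classical estimate).
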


\begin{proof}
Identifying $V$ with $\mathbb{R}^m$ in the natural way, $\pi_V: \mathbb{R}^n \to V$ is Lipschitz so the inequalities \eqref{dbin} follow from Theorem \ref{liplem} taking $\alpha=1$. 
 
 Now note that values  $\phi^m_r(x)$ are comparable to the proportion of the subspaces $V\in G(n,m)$ for which the  $r$-neighbourhoods of  the orthogonal subspaces to $V$ contain $x$, specifically, for all $1\leq m < n$ there are numbers $a_{n,m}>0$ such that 
\be\label{equiveqn}
\phi_r^m (x) \ \leq\ \gamma_{n,m}\big\{V: |\pi_Vx| \leq r\big\}\ \leq\ a_{n,m}\,\phi_r^m (x)
\qquad (x\in \rn).
\ee
This standard geometrical estimate can be obtained in many ways, see for example \cite[Lemma 3.11]{Mat}. One approach is to normalise to the case where $|x|$ = 1 and then estimate the (normalised) $(n-1)$-dimensional spherical area of $S \cap \{y: {\rm dist}(y,V^\perp)\leq r\}$, that is  the intersection of the unit sphere $S$ in $\rn$ with the `tube' or `slab' of points within distance $r$ of some $(n-m)$-dimensional subspace $V^\perp$ of $\rn$. In particular
\be\label{equiveqn1}
\gamma_{n,m}\big\{V: |\pi_Vx-\pi_Vy| \leq r\big\}\ \leq\ a_{n,m}\,\phi_r^m (x-y)
\qquad (x,y\in \rn).
\ee
Taking $\Omega = G(n,m)$ and $\P =\gamma_{n,m}$, this is \eqref{probbound} with $s=m$ and $\gamma = 1$. Thus \eqref{dbeq} follows from Theorem \ref{mainnew}.
\end{proof}

Whilst equality holds in  \eqref{dbeq} for $\gamma_{n,m}$-almost all $V \in G(n,m)$, dimension profiles can provide further information on the size of the set of $V $ for which the box dimensions of the projections $ \pi_V E$ are exceptionally small; this is analogous to the bounds on the dimensions of exceptional projections that have been obtained for Hausdorff dimensions \cite{KM,Mat4, Mat, Mat2}. Note that $ G(n,m)$ is a manifold of dimension  $m(n-m)$ and thus $\hdd G(n,m) =   m(n-m)$, where $\hdd$ denotes Hausdorff dimension.  In particular the dimension bound for the exceptional set of $V$ given by  the following theorem decreases as the deficit $m-s\geq 0$ increases. The proof is similar to that of Theorem \ref{main} except integration is with respect to a measure $\nu$ supported  on the exceptional set of $V$, with  \eqref{equiveqn} replaced by an estimate involving $\nu$ rather than $\gamma_{n,m}$.  
\begin{theo}
Let $E\subset \mathbb{R}^n$ be compact and let $0\leq s\leq m$. Then  
\be\label{dbeqa}
\hdd \{V \in  G(n,m) \mbox{ \rm such that } \ubd \pi_V E \,  < \,   \ubd^s E\}
\ \leq\  m(n-m) - (m-s),
\ee
and
\be\label{dbeqb}
\hdd \{V \in  G(n,m) \mbox{ \rm such that } \lbd \pi_V E \,  < \,   \lbd^s E\}
\ \leq\  m(n-m) - (m-s).
\ee
\end{theo}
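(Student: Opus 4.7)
I would argue by contradiction along the lines of Theorem~\ref{main}, but with $\gamma_{n,m}$ replaced by a Frostman measure on the exceptional set. Suppose \eqref{dbeqa} fails, so there is $t$ with $m(n-m)-(m-s) < t < \hdd \mathcal{E}$, where $\mathcal{E}=\{V\in G(n,m):\ubd \pi_V E< \ubd^s E\}$. By Frostman's lemma, applied in the compact manifold $G(n,m)$, there is a Borel probability measure $\nu$ supported on $\mathcal{E}$ satisfying
\[
\nu(B(V,\rho))\ \leq\ c\,\rho^{t} \qquad(V\in G(n,m),\ \rho>0).
\]
(Measurability of $V\mapsto \ubd \pi_V E$ is routine and I would handle it by reducing to inner-regular analytic subsets of $\mathcal{E}$.)

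The key geometric step is to replace \eqref{equiveqn1} by a $\nu$-estimate of the correct order. For a unit vector $u\in\rn$, the set $\{V:|\pi_V u|\leq r\}$ is essentially an $r$-neighbourhood (in the natural metric on $G(n,m)$) of the smooth submanifold $\{V:u\in V^{\perp}\}$, which is diffeomorphic to $G(n-1,m)$ and thus has dimension $d_0=m(n-1-m)=m(n-m)-m$. Covering this tube by $\lesssim r^{-d_0}$ balls of radius $r$ and applying the Frostman bound to each yields
\[
\nu\{V:|\pi_V u|\leq r\}\ \leq\ c\, r^{\,t-d_0}\ =\ c\, r^{\,s'},\qquad s':= t-m(n-m)+m,
\]
where $s'>s$ by the choice of $t$. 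Homogeneity in the $u$-direction (rescaling $z\neq 0$ to $z/|z|$ and $r$ to $r/|z|$, and noting the estimate is trivial when $r\geq|z|$) then gives the uniform bound
\begin{equation}\label{nuprob}
\nu\{V:|\pi_V x-\pi_V y|\leq r\}\ \leq\ c\,\phi_r^{s'}(x-y)\qquad(x,y\in\rn,\ r>0).
\end{equation}

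With \eqref{nuprob} in hand, I would apply Theorem~\ref{mainnew} verbatim to the family $f_V=\pi_V$ on the probability space $(G(n,m),\nu)$, with parameters $s'$ and $\gamma=1$, to conclude that $\ubd \pi_V E \geq \ubd^{s'} E$ for $\nu$-almost every $V$. Since $\phi_r^{s'}\leq \phi_r^{s}$ whenever $s'\geq s$, capacities are monotonic in $s$, so $\ubd^{s'} E\geq \ubd^{s} E$. Therefore $\nu$-a.e.\ $V$ satisfies $\ubd \pi_V E\geq \ubd^s E$, contradicting the fact that $\nu$ is supported on $\mathcal{E}$. This proves \eqref{dbeqa}; \eqref{dbeqb} follows by the same argument restricted, as in Theorem~\ref{mainnew}, to the sequence $r=2^{-i}$ in the definition of $\lbd$.

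The only nontrivial obstacle is the passage from the Frostman condition on $\nu$ to the tube estimate \eqref{nuprob}; everything else is a direct transcription of the proofs of Theorem~\ref{main} and Theorem~\ref{mainnew}. The tube estimate itself is a standard piece of Grassmanian geometry (essentially the observation that $\{V:u\in V^{\perp}\}$ has codimension $m$ in $G(n,m)$ and that $|\pi_V u|$ is comparable to the Grassmanian distance to that submanifold), but care is needed to perform the covering intrinsically in $G(n,m)$ so that the Frostman bound applies ball by ball.
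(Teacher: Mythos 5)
Your proposal is correct and follows essentially the same route as the paper's own proof: contradiction via a Frostman measure $\nu$ on the exceptional set, a $\nu$-version of the tube estimate \eqref{equiveqn1}, and an application of Theorem~\ref{mainnew} with $\gamma=1$. The only differences are cosmetic --- you spell out the covering argument behind the tube estimate (which the paper delegates to Mattila) and you work with an exponent $s'>s$ plus monotonicity of the profiles rather than with $s$ itself.
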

\begin{proof}
Let
$$K = \{V \in  G(n,m) \mbox{ such that } \ubd \pi_V E \,  < \,   \ubd^s E\}.$$
Suppose for a contradiction that $\hdd K  >  m(n-m) - (m-s)$.   By Frostman's Lemma, see \cite{Mat4,Mat2}, there is a Borel probability measure $\nu$ supported on a compact subset of $K$ and $c>0$ such that 
$\nu (B_G(V,\rho)) \leq c\rho^{m(n-m) - (m-s)}$  for all $V \in G(n,m)$ and $\rho>0$, where   $B_G(V,\rho)\subset G(n,m)$ is the ball centre $V$ and radius $\rho$ with respect to some natural locally $m(n-m)$-dimensional metric on the Grassmanian manifold $G(n,m)$. This ensures that the subspaces in $K$ cannot be too densely concentrated, and analogously to  \eqref{equiveqn} and \eqref{equiveqn1} we obtain 
$$\nu\big\{V: |\pi_Vx-\pi_Vy| \leq r\big\}\ \leq\ a\,\phi_r^s (x-y)
\qquad (x\in \rn)$$
for some $a>0$, see \cite{Mat4} or \cite[Inequality\! (5.12)]{Mat2} for more details. Taking 
$\Omega = G(n,m)$ and $\P =\nu$ with  $\gamma = 1$ in  Theorem \ref{mainnew} gives that   $\ubd \pi_V E   \geq  \ubd^s E$ for $\nu$-almost all  $V\in  G(n,m)$, contradicting the definition of $k$. The proof for lower box dimensions is similar. 
\end{proof}

\subsection{Images under stochastic processes}
Almost immediately after their introduction, Xiao \cite{SX,Xi} used dimension profiles to determine the packing dimensions of the images of sets under fractional Brownian motions. In our framework, the box dimension analogues follow easily from Theorems \ref{liplem} and  \ref{mainnew}. We first state a result that applies to general random functions.

\begin{prop}\label{holder}
Let $X:\mathbb{R}^n \to \mathbb{R}^m$ be a random function, let $E \subset \mathbb{R}^n$ be compact and let $0<\alpha \leq 1$.  Suppose that 

{\rm (a)} for all $0<\epsilon <\alpha$ there is a random constant $M>0$ such that 
\be\label{holstoc}
|X(x)-X(y)| \ \leq\  M |x-y|^{\alpha-\epsilon} \qquad (x,y \in E), 
\ee
almost surely,  and

{\rm (b)}
for all $\epsilon>0$ there is a constant $c>0$
\be\label{probstoc}
\P\big(|X(x) - X(y)|\leq r\big)\ \leq\ c\,\Big(\frac{r^{1- \epsilon}}{|x-y|^{\alpha + \epsilon}}\Big)^m
\qquad (x,y \in E, \ r>0). 
\ee

\noindent Then, almost surely, 
$$\lbd X( E)\  = \  \frac{1}{\alpha}\, \lbd^{m\alpha} E  \quad \mbox{and}\quad  \ubd X( E)\  = \   \frac{1}{\alpha}\, \ubd^{m\alpha}  E.$$
\end{prop}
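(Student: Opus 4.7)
The strategy is to combine the deterministic upper bound of Theorem \ref{liplem} with the probabilistic lower bound of Theorem \ref{mainnew}, together with the elementary observation that the profiles $\ubd^s E$ and $\lbd^s E$ are non-decreasing in $s$. This monotonicity is immediate from \eqref{dimpro}: since $\phi_r^s$ is non-increasing in $s$, the energies in \eqref{minen} are non-increasing in $s$, so $C_r^s(E)$ is non-decreasing in $s$.

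For the upper bound, fix a sequence $\epsilon_k \downarrow 0$. Hypothesis (a) ensures that, outside a single null set, $X$ satisfies \eqref{lip} on $E$ with exponent $\alpha-\epsilon_k$ (with some random constant) for every $k$. Applying Theorem \ref{liplem} pathwise with $\alpha-\epsilon_k$ in place of $\alpha$ and then using monotonicity of the profile yields, almost surely,
$$\ubd X(E) \ \leq\ \frac{1}{\alpha-\epsilon_k}\, \ubd^{m(\alpha-\epsilon_k)} E \ \leq\ \frac{1}{\alpha-\epsilon_k}\, \ubd^{m\alpha} E,$$
so letting $k\to\infty$ gives $\ubd X(E) \leq \frac{1}{\alpha}\ubd^{m\alpha} E$; the same bound for $\lbd$ is obtained identically.

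For the lower bound, combine hypothesis (b) with the trivial estimate $\P(\cdot)\leq 1$ to obtain
$$\P\big(|X(x)-X(y)|\leq r\big) \ \leq\ \min\bigg\{1,\, c\frac{r^{m(1-\epsilon)}}{|x-y|^{m(\alpha+\epsilon)}}\bigg\} \ \leq\ c'\,\phi_{r^\gamma}^{s}(x-y),$$
with the choice $s = m(\alpha+\epsilon)$, $\gamma = (1-\epsilon)/(\alpha+\epsilon)$, and $c' = \max\{c,1\}$; indeed $r^{\gamma s}=r^{m(1-\epsilon)}$ and the exponents on $|x-y|$ agree. Hypothesis \eqref{probbound} of Theorem \ref{mainnew} is therefore satisfied, giving almost surely
$$\ubd X(E) \ \geq\ \gamma\,\ubd^s E \ =\ \frac{1-\epsilon}{\alpha+\epsilon}\, \ubd^{m(\alpha+\epsilon)} E \ \geq\ \frac{1-\epsilon}{\alpha+\epsilon}\, \ubd^{m\alpha} E.$$
Applying this with $\epsilon=\epsilon_k\downarrow 0$ and intersecting the countably many resulting null sets produces $\ubd X(E) \geq \frac{1}{\alpha}\ubd^{m\alpha} E$ almost surely, and the matching $\lbd$ bound follows in the same way.

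The only step requiring any real care is the translation of the probability bound (b) into the $\phi^s_{r^\gamma}$ form of \eqref{probbound}, which amounts to correctly reading off the exponents $s$ and $\gamma$ and absorbing the constant $c$. After that the argument is a routine application of the two earlier theorems together with the profile's monotonicity, and the passage from the countable family of $\epsilon_k$-events to a single almost-sure statement is automatic.
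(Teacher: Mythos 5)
Your proof is correct and follows essentially the same route as the paper: the paper likewise rewrites hypothesis (b) as $\P(|X(x)-X(y)|\leq r)\leq c_0\,\phi^{(\alpha+\epsilon)m}_{r^{(1-\epsilon)/(\alpha+\epsilon)}}(x-y)$ and then invokes Theorems \ref{liplem} and \ref{mainnew} with $\epsilon$ arbitrarily small. You merely spell out the details the paper leaves implicit (monotonicity of $s\mapsto\ubd^{s}E$ and the countable intersection of null sets), which is a useful but not different argument.
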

\begin{proof}
Note that condition \eqref{probstoc} implies that 
$$\P\big(|X(x) - X(y)|\leq r\big)\ \leq\ \min \Big\{1, c\,\Big(\frac{r^{(1-\epsilon)/(\alpha + \epsilon)}}{|x-y|}\Big)^{(\alpha + \epsilon) m}\Big\} 
\ \leq \ c_0 \phi^{(\alpha + \epsilon)m}_{r^{(1-\epsilon)/(\alpha + \epsilon)}}(x-y).$$
The conclusion is immediate using Theorem \ref{liplem} and Theorem \ref{mainnew} taking $\epsilon$ arbitrarily small.
\end{proof}

Index-$\alpha$ fractional Brownian motion $(0<\alpha <1)$ is the Gaussian random function $X:\mathbb{R}^n \to \mathbb{R}^m$  that with probability 1 is continuous with $X(0) = 0$ and such that the increments 
$X(x)-X(y)$ are multivariate normal with mean 0 and variance $|x-y|^\alpha$, see, for example,  \cite{Fa,Kah}. In particular, 
$X= (X_1, \ldots,X_m)$ where the $X_i:\mathbb{R}^n \to \mathbb{R}$ are independent index-$\alpha$ fractional Brownian motions with distributions given by 
\be\label{fracgauss}
\P\big(X_i (x)-X_i (y)\in A\big)\ \ =\ \frac{1}{\sqrt{2\pi}}\frac{1}{|x-y|^{\alpha}}
\int_{t\in A}  \exp\Big(\frac{-t^2}{2|x-y|^{2\alpha}}\Big) dt 
\ee
for each Borel set $A\subset \mathbb{R}$.

\begin{cor}
Let $X:\mathbb{R}^n \to \mathbb{R}^m$ be index-$\alpha$ fractional Brownian motion $(0<\alpha <1)$ and let $E \subset \mathbb{R}^n$ be compact. 
 Then, almost surely, 
$$\lbd X( E)\  = \  \frac{1}{\alpha}\, \lbd^{m\alpha} E  \quad \mbox{and}\quad  \ubd X( E)\  = \   \frac{1}{\alpha}\, \ubd^{m\alpha}  E.$$
\end{cor}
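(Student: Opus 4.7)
The plan is simply to verify the two hypotheses (a) and (b) of Proposition \ref{holder} for index-$\alpha$ fractional Brownian motion $X$ and then invoke that proposition. Both hypotheses are classical consequences of the Gaussian nature of the increments, so the argument is essentially a translation from well-known sample-path and small-ball properties of FBM.

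For condition (a), I would appeal to Kolmogorov's continuity theorem. Since $X(x)-X(y)$ is a centred multivariate normal whose covariance is $|x-y|^{2\alpha}$ times the $m\times m$ identity (as read off from the density \eqref{fracgauss}), all moments satisfy $\E|X(x) - X(y)|^p = C_{p,m} |x-y|^{\alpha p}$; Kolmogorov's criterion then supplies a continuous modification whose sample paths are, almost surely, $(\alpha-\epsilon)$-H\"older on the compact set $E$ for every $0 < \epsilon < \alpha$, with a random prefactor $M$. This is the standard FBM regularity statement, as in \cite{Kah} or \cite{Fa}, so \eqref{holstoc} holds.

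For condition (b), I would exploit the independence of the coordinate processes $X_1, \ldots, X_m$. Bounding the Gaussian density in \eqref{fracgauss} by its maximum $1/(\sqrt{2\pi}|x-y|^\alpha)$ and integrating over an interval of length $2r$ gives, for each coordinate, $\P(|X_i(x) - X_i(y)| \leq r) \leq c_1 \min\{1, r/|x-y|^\alpha\}$. Since $\{|X(x) - X(y)| \leq r\} \subset \bigcap_{i=1}^m \{|X_i(x) - X_i(y)| \leq r\}$, multiplying these independent probabilities yields
$$\P\big(|X(x) - X(y)| \leq r\big)\ \leq\ c\,\min\Big\{1, \Big(\frac{r}{|x-y|^\alpha}\Big)^m\Big\}.$$
On the bounded set $E$ and for $r$ bounded, this is easily dominated by the right-hand side of \eqref{probstoc} for any $\epsilon > 0$, so condition (b) is verified. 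Proposition \ref{holder} then delivers the conclusion at once.

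I do not anticipate any real obstacle: both conditions are standard and the whole point of the corollary is that, once Proposition \ref{holder} is available, FBM is a particularly clean example. The only arithmetic care required is the factor-of-$m$ gain in the small-ball probability coming from independence, which is precisely what makes the dimension profile at parameter $m\alpha$ (rather than $\alpha$) the relevant one on the right-hand side.
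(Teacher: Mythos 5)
Your proposal is correct and follows essentially the same route as the paper: verify hypotheses (a) and (b) of Proposition \ref{holder}, using the standard H\"older continuity of fractional Brownian motion for (a) and the independence of the coordinate processes $X_1,\dots,X_m$ for (b). The only (harmless) difference is in bounding the one-dimensional Gaussian probability: you bound the density by its maximum to get $\P(|X_i(x)-X_i(y)|\leq r)\leq c\min\{1,r/|x-y|^{\alpha}\}$, which is in fact slightly sharper than the paper's use of $\exp(-u)\leq c_\epsilon u^{-\epsilon}$ and still dominates the right-hand side of \eqref{probstoc} for every $\epsilon>0$ since $E$ is bounded and the inequality is trivial for large $r$.
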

\begin{proof}
Index-$\alpha$ fractional Brownian motion satisfies an $(\alpha - \epsilon)$-H\"{o}lder condition for all $0<\epsilon <\alpha$, so \eqref{holstoc} is satisfied.

Furthermore, for each $\epsilon >0$
\begin{align*}
\P\big(|X(x) - X(y)|\leq r\big)\ \ & \leq\ \P\big((|X_i(x) - X_i(y)|\leq r\mbox{ for all } 1\leq i \leq m\big)\\ 
& \leq\ \bigg(\frac{1}{\sqrt{2\pi}}\frac{1}{|x-y|^{\alpha}}
\int_{|t|\leq r}  \exp\Big(\frac{-t^2}{2|x-y|^{2\alpha}}\Big) dt\bigg)^{m}  \\
&\leq c\bigg(\frac{r^{1-\epsilon} }{|x-y|^{\alpha(1+\epsilon)}}\bigg)^m
\end{align*}
using that $\exp(-u) \leq c_\epsilon u^{-\epsilon}$ for $u>0$, giving \eqref{probstoc}. The conclusion follows by Proposition \ref{holder}.
\end{proof}

\subsection{Inequalities}

We exhibit some inequalities satisfied by the dimension profiles; these were obtained in \cite[Section 6]{FH2} but their derivation is  more direct using our capacity approach.
\begin{prop}\label{ineqs}
Let $E\subset \mathbb{R}^n$ and set either  $d(s)= \lbd^s E$  or $d(s)= \ubd^s E$. Then  for  $0< s\leq t$,
\be\label{ineq1}
0\ \leq\ d(s) \ \leq\ d(t)\  \leq \ n, 
\ee
\be\label{ineq15}
 \qquad d(s) \ \leq s,
\ee
and 
\be\label{ineq2}
0\ \leq \ \frac{1}{d(s)} - \frac{1}{s} \ \leq\  \frac{1}{d(t)} - \frac{1}{t}.
\ee
\end{prop}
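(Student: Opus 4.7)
The plan is to obtain each inequality from direct manipulations of the capacity $C_r^s(E)$ and the kernel $\phi_r^s(x) = (\min\{1, r/|x|\})^s$, using throughout the pointwise identity $\phi_r^s = (\phi_r^t)^{s/t}$ valid for $s \le t$.  Inequalities \eqref{ineq1} and \eqref{ineq15} will follow from elementary pointwise kernel estimates, while \eqref{ineq2} is strictly stronger than the Jensen-type bound on capacities and constitutes the main obstacle; it requires additional information drawn from Lemma \ref{equilib}.

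For \eqref{ineq1} and \eqref{ineq15}: since $\phi_r^t(x) \in [0,1]$ and $s/t \le 1$, the identity $\phi_r^s = (\phi_r^t)^{s/t}$ gives $\phi_r^s \ge \phi_r^t$ pointwise.  Hence every $\mu \in \mathcal{M}(E)$ has $\int\int \phi_r^s\,d\mu\,d\mu \ge \int\int \phi_r^t\,d\mu\,d\mu$, so $C_r^s(E) \le C_r^t(E)$, and taking $\liminf$ or $\limsup$ of $\log C_r^s(E)/(-\log r)$ yields $d(s) \le d(t)$.  The lower bound $d(s) \ge 0$ follows from $\phi_r^s \le 1$, which forces $C_r^s(E) \ge 1$; the upper bound $d(t) \le n$ follows from Corollary \ref{sgeqn} (which gives $d(u) = \lbd E$ or $\ubd E$ for $u \ge n$, both at most $n$) combined with monotonicity.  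For \eqref{ineq15}, with $M := {\rm diam}(E)$, the estimate $|x-y| \le M$ yields $\phi_r^s(x-y) \ge (r/M)^s$ for all $x,y \in E$ and $r \le M$; hence $\int\int \phi_r^s\,d\mu\,d\mu \ge (r/M)^s$ and $C_r^s(E) \le (M/r)^s$, and taking logarithms gives $d(s) \le s$ in the limit.

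For \eqref{ineq2}: the natural starting point is Jensen's inequality applied to $\phi_r^s = (\phi_r^t)^{s/t}$ using concavity of $z \mapsto z^{s/t}$ on $[0,1]$.  Two applications (one for each marginal of $\mu \times \mu$) produce $\int\int \phi_r^s\,d\mu\,d\mu \le \bigl(\int\int \phi_r^t\,d\mu\,d\mu\bigr)^{s/t}$, which, applied to the equilibrium measure for $\phi_r^t$, gives $C_r^s(E) \ge C_r^t(E)^{s/t}$ and in the limit the Jensen-type relation $d(s)/s \ge d(t)/t$, equivalently $s/d(s) \le t/d(t)$.  This is strictly weaker than \eqref{ineq2}, which demands the reciprocal-form inequality $1/d(s) - 1/s \le 1/d(t) - 1/t$.

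The gap must be closed by exploiting the pointwise information in Lemma \ref{equilib}.  Let $\mu_0$ denote the equilibrium measure for $\phi_r^s$: from $\int \phi_r^s(x-y)\,d\mu_0(y) \ge 1/C_r^s(E)$ for all $x \in E$ and the inequality $\phi_r^s(x-y) \ge (r/\rho)^s \mathbf{1}_{B(x,\rho)}(y)$ valid for $\rho \ge r$, one derives the Frostman-type estimate $\mu_0(B(x,\rho)) \le \rho^s/(r^s\, C_r^s(E))$ for $\rho \ge r$ and $x \in {\rm supp}\,\mu_0$.  Substituting into the integral representation $\int \phi_r^t(x-y)\,d\mu_0(y) = t r^t \int_r^\infty v^{-t-1} \mu_0(B(x,v))\,dv$ and splitting at the critical radius $v_\ast = r \cdot C_r^s(E)^{1/s}$, beyond which the Frostman bound saturates at $1$, yields a refined upper bound on $\int\int \phi_r^t\,d\mu_0\,d\mu_0$; combining with the trivial lower bound $1/C_r^t(E) \le \int\int \phi_r^t\,d\mu_0\,d\mu_0$ produces the required capacity comparison, from which \eqref{ineq2} follows after taking logarithms and limits as $r \to 0$.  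The main technical hurdle is to execute this split-integral computation sharply enough to extract the precise reciprocal-form inequality \eqref{ineq2} rather than merely recovering the weaker Jensen relation, since the naive geometric-series estimate only yields a multiplicative comparison $C_r^t(E) \gtrsim C_r^s(E)$ and hence monotonicity.
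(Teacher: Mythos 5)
Your treatments of \eqref{ineq1} and \eqref{ineq15} are correct. For \eqref{ineq1} the pointwise comparison $\phi_r^s=(\phi_r^t)^{s/t}\ge\phi_r^t$ together with Corollary \ref{sgeqn} is exactly what is needed, and your proof of \eqref{ineq15} via $\phi_r^s(x-y)\ge (r/{\rm diam}\,E)^s$, hence $C_r^s(E)\le({\rm diam}\,E/r)^s$, is if anything cleaner than the argument in the paper. You also correctly diagnose that the Jensen bound $C_r^s(E)\ge C_r^t(E)^{s/t}$ only yields $d(s)\ge (s/t)d(t)$, which is weaker than \eqref{ineq2}.

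The proof of \eqref{ineq2}, however, has a genuine gap, and not only the one you concede. First, your proposed mechanism runs in the wrong direction. Writing $\gamma=1/C_r^s(E)$, the Frostman estimate $\mu_0(B(x,\rho))\le\gamma(\rho/r)^s$ (which, note, requires the \emph{equality} $\int\phi_r^s(x-y)\,d\mu_0(y)=\gamma$ for $\mu_0$-a.e.\ $x$ from Lemma \ref{equilib}, not the inequality $\ge\gamma$ you quote) feeds into an \emph{upper} bound on $\iint\phi_r^t\,d\mu_0\,d\mu_0$, hence a \emph{lower} bound on $C_r^t(E)$ and a lower bound on $d(t)$ in terms of $d(s)$. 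But \eqref{ineq2} is equivalent to the \emph{upper} bound $d(t)\le 1/(1/d(s)-1/s+1/t)$, equivalently the lower bound $d(s)\ge d(t)/(1+(1/s-1/t)d(t))$; to prove an upper bound on $d(t)$ you would need a lower bound on the $t$-energy of \emph{every} measure, which no estimate on the single measure $\mu_0$ can provide. Second, even disregarding direction, if you carry out the split at $v_*=rC_r^s(E)^{1/s}$ you get $\iint\phi_r^t\,d\mu_0\,d\mu_0\le\frac{t}{t-s}\gamma+\gamma^{t/s}\lesssim\gamma$, i.e.\ only $C_r^t(E)\gtrsim C_r^s(E)$; no sharpening at a single scale $r$ can do better, because a multiplicative same-scale comparison can only produce a \emph{linear} relation between $d(s)$ and $d(t)$, whereas \eqref{ineq2} is a M\"{o}bius relation. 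The missing idea is a \emph{two-scale} comparison coupled through the unknown exponent: take $\mu$ to be the equilibrium measure for $\phi_R^t$ with $C_R^t(E)\ge R^{-d}$, estimate for $r<R$
$$\int\phi_r^s(x-y)\,d\mu(y)\ \le\ \mu(B(x,R))+r^sR^{-s}\Big(\int\phi_R^t(x-y)\,d\mu(y)\Big)^{s/t}$$
by H\"{o}lder's inequality on the region $|x-y|>R$, and then choose $R=r^{1/(1+(1/s-1/t)d)}$ so that both terms are $O(R^{d})$ after normalisation; this yields $C_r^s(E)\ge\frac12 r^{-d/(1+(1/s-1/t)d)}$ and hence $d(s)\ge d(t)/(1+(1/s-1/t)d(t))$. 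It is precisely the $d$-dependent link between the scales $r$ and $R$ that generates the reciprocal form of \eqref{ineq2}, and this element is absent from your plan.
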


\begin{proof}
Inequality \eqref{ineq1} is immediate from comparison of the  kernels \eqref{ker} for $s$ and $t$. The bound \eqref{ineq15} follows by noting that $C_r^s(E)^{-1} = \int\phi_r^s(x-y)d\mu_0(y) \geq r^s\int_r^\infty|x-y|^{-s}d\mu_0(y)$ for some $x\in E$, where  $\mu_0$ is an energy-minimising measure on $E$, and this last integral is bounded away from 0 for small $r$.

For \eqref{ineq2}  let $0<r<R$, $0<s<t$ and $d>0$. Then for $\mu\in {\mathcal M}(E)$ and $x\in E$, splitting the integral and using H\"{o}lder's inequality,
\begin{eqnarray*}
\int \phi_r^s(x-y) d\mu(y) 
&\leq& \mu(B(x,R)) + \int_{|x-y|>R} \Big(\frac{r}{|x-y|}\Big)^sd\mu(y) \\
&=& \mu(B(x,R)) + r^sR^{-s}\int_{|x-y|>R} \Big(\frac{R}{|x-y|}\Big)^s d\mu(y) \\
&\leq & \mu(B(x,R)) +  r^sR^{-s}\bigg(\int_{|x-y|>R} \Big(\frac{R}{|x-y|}\Big)^t d\mu(y)\bigg)^{s/t} \\
&\leq & \int\phi_R^t(x-y) d\mu(y) +  r^sR^{-s}\bigg(\int\phi_R^t(x-y) d\mu(y)\bigg)^{s/t} \\
&\leq &R^{d} \bigg(R^{-d}\int\phi_R^t(x-y) d\mu(y)\bigg) +  r^sR^{s(d/t-1)}\bigg(R^{-d}\int\phi_R^t(x-y) d\mu(y)\bigg)^{s/t}
\end{eqnarray*}
Setting $R= r^{1/(1+(1/s -1/t)d)}$ this rearranges to 
$$r^{-d/(1+(1/s-1/t)d)} \int \phi_r^s(x-y) d\mu(y) \ \leq\ R^{-d}\int\phi_R^t(x-y) d\mu(y)
+\bigg(R^{-d}\int\phi_R^t(x-y) d\mu(y)\bigg)^{s/t}.$$

If $C_R^t (E) \geq R^{-d}$ for some $R$ then by Lemma \ref{equilib} there is an energy-minimising measure $\mu\in {\mathcal M}(E)$ such that the right-hand side of this inequality, and thus the left-hand side, is at most 2 for $\mu$-almost all $x$, so  
$C_r^s (E) \geq \frac{1}{2}r^{-d/((1+(1/s-1/t)d)}$ for the corresponding $r$. Letting $R\searrow 0$, t follows that
$$d(s)\ \geq\ \frac{d(t)}{1 +(1/s -1/t)d(t)},$$
where $d(\cdot)$ is either the lower or upper dimension profile, which rearranges to \eqref{ineq2}.
\end{proof}
\bigskip

Examples show that the inequalities in Lemma \ref{ineqs} give a complete characterisation of the dimension profiles that can be attained, see  \cite[Section 6]{FH2}. Note also that it follows easily from \eqref{ineq1} and \eqref{ineq2} that $d: \mathbb{R}^+ \to \mathbb{R}^+$ is continuous and, indeed, locally Lipschitz with constant 1.

\section{Packing dimensions}
\setcounter{equation}{0}
\setcounter{theo}{0}

In this final section we indicate how the results on box-counting dimensions carry over to the packing dimensions of projections and images of sets.

Packing measures and dimensions were introduced by Taylor and Tricot \cite{Tri,TT} as a type of dual to Hausdorff measures and dimensions, see \cite{Fa,Mat} for more recent expositions. Whilst, analogously to Hausdorff dimensions, packing dimensions can be defined by first setting up packing measures,  an equivalent definition in terms of  upper box dimensions  of countable coverings of a set is often more convenient in practice. For $E \subset \mathbb{R}^n$ we may define the {\it packing dimension} of $E$ by
\be\label{packdef}
\pkd E \ = \  \inf \Big\{\sup_{1\leq i<\infty} \ubd E_i : E \subset \bigcup_{i =1}^\infty E_i\Big\};
\ee
since the box dimension of a set equals that of its closure, we can assume that the sets $E_i$ in \eqref{packdef} are all compact.

It is natural to make an analogous definition of the {\it packing dimension profile} of $E \subset \mathbb{R}^n$ for $s>0$ by
\be\label{packpro}
\pkd^s E \ = \  \inf \Big\{\sup_{1\leq i<\infty} \ubd^s E_i : E \subset \bigcup_{i =1}^\infty E_i \mbox{ with each } E_i  \mbox{ compact} \Big\}.
\ee

By virtue of this definition, properties of packing dimension can be deduced from corresponding properties of upper box dimension. For example, we get an immediate analogue of Corollary \ref{sgeqn}.
\begin{cor}
Let $E\subset \mathbb{R}^n$. If $s\geq n$ then 
$$\pkd^s E\   =\   \pkd E.$$
\end{cor}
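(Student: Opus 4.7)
The proof is essentially a one-line consequence of Corollary \ref{sgeqn} and the definitions, so my plan is simply to unwind the definitions in parallel and match them term by term.

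First I would observe that the definition of packing dimension \eqref{packdef} may, as the paper already notes, be restricted to covers $\{E_i\}$ by compact sets, since replacing any $E_i$ by its closure does not change $\ubd E_i$. The definition \eqref{packpro} of $\pkd^s E$ is already formulated over compact covers. So both infima in \eqref{packdef} and \eqref{packpro} are taken over precisely the same class of countable compact coverings of $E$.

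Next, for any compact set $E_i \subset \mathbb{R}^n$ and any $s \geq n$, Corollary \ref{sgeqn} gives $\ubd^s E_i = \ubd E_i$. Consequently, for every compact cover $E \subset \bigcup_{i=1}^\infty E_i$,
\[
\sup_{1 \leq i < \infty} \ubd^s E_i \ = \ \sup_{1 \leq i < \infty} \ubd E_i,
\]
and taking the infimum over all such covers gives $\pkd^s E = \pkd E$.

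There is no real obstacle here; the only tiny subtlety is the remark that the infimum in \eqref{packdef} can be restricted to compact covers, which the paper already supplies. Everything else is a direct application of Corollary \ref{sgeqn}.
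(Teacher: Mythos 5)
Your proof is correct and is precisely the argument the paper has in mind: the paper omits the proof as immediate, noting only that properties of packing dimension follow from the corresponding properties of upper box dimension via the covering definitions \eqref{packdef} and \eqref{packpro}. Your term-by-term matching of the two infima over compact covers, using Corollary \ref{sgeqn} on each $E_i$, is exactly that intended argument spelled out.
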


Similarly,  packing dimensions of H\"{o}lder images behave in the same way as box dimensions in Theorem \ref{liplem}.
\begin{cor}\label{packcor1}
Let $E\subset \mathbb{R}^n$ and let $f:E\to \mathbb{R}^m$ be an $\alpha$-H\"{o}lder map satisfying
$$
|f(x)-f(y)| \ \leq\  c |x-y|^\alpha \qquad (x,y \in E), 
$$
where $c>0$ and $0<\alpha \leq 1$. Then 
$$\pkd f( E)\  \leq \  \frac{1}{\alpha}\, \pkd^{m\alpha} E.$$
\end{cor}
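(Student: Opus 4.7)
The plan is to derive the packing-dimension statement from Theorem \ref{liplem} by combining the definition \eqref{packpro} of the packing dimension profile with the definition \eqref{packdef} of packing dimension. Given $\epsilon>0$, I pick a cover $E\subset\bigcup_i E_i$ by compact sets such that $\sup_i \ubd^{m\alpha} E_i \leq \pkd^{m\alpha}E + \epsilon$. The strategy is to turn this into a cover of $f(E)$ whose upper box dimensions are each controlled by $\frac{1}{\alpha}\ubd^{m\alpha}E_i$.

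For each $i$, set $A_i = E\cap E_i$; then $f(E) = \bigcup_i f(A_i) \subset \bigcup_i \overline{f(A_i)}$. Each $\overline{f(A_i)}$ is compact, being closed and bounded (the latter because $f$ is H\"older on the bounded set $A_i$), and $\ubd \overline{f(A_i)} = \ubd f(A_i)$ since box-counting dimension is insensitive to closure. The restriction $f|_{A_i}$ is uniformly continuous, so it extends uniquely to an $\alpha$-H\"older map $\tilde f:\overline{A_i}\to\mathbb{R}^m$ with the same constant $c$, and by continuity $\tilde f(\overline{A_i}) = \overline{f(A_i)}$. Since $\overline{A_i}$ is a closed subset of the compact set $E_i$, it is compact, so Theorem \ref{liplem} applies to $\tilde f$ to give
$$\ubd \overline{f(A_i)} \ =\ \ubd \tilde f(\overline{A_i}) \ \leq\ \tfrac{1}{\alpha}\,\ubd^{m\alpha}\overline{A_i}.$$
Monotonicity of capacity under set inclusion (immediate from ${\mathcal M}(A)\subset{\mathcal M}(B)$ when $A\subset B$) transfers to monotonicity of $\ubd^{m\alpha}$, so $\ubd^{m\alpha}\overline{A_i} \leq \ubd^{m\alpha} E_i$.

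Combining these estimates and appealing to \eqref{packdef} with the compact cover $\{\overline{f(A_i)}\}$ of $f(E)$ yields
$$\pkd f(E) \ \leq\ \sup_i \ubd \overline{f(A_i)} \ \leq\ \tfrac{1}{\alpha}\sup_i \ubd^{m\alpha}E_i \ \leq\ \tfrac{1}{\alpha}\bigl(\pkd^{m\alpha}E + \epsilon\bigr),$$
and letting $\epsilon\searrow 0$ gives the conclusion. The only non-routine ingredient is the H\"older extension from $A_i$ to $\overline{A_i}$, needed to pull the (possibly non-compact) intersection $E\cap E_i$ into the compact setting required by Theorem \ref{liplem}; this is the main technical point, though it is classical. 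Everything else is bookkeeping on definitions and the monotonicity of $C_r^s$.
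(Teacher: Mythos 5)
Your proof is correct and follows essentially the same route as the paper: take a countable compact cover nearly attaining the infimum in \eqref{packpro}, apply Theorem \ref{liplem} to each piece, and conclude from \eqref{packdef}. The paper simply writes $\ubd f(E_i) \leq \frac{1}{\alpha}\ubd^{m\alpha}E_i$ without commenting on the fact that $f$ is defined only on $E$ (so that $f(E_i)$ must be read as $f(E\cap E_i)$); your H\"{o}lder-extension of $f|_{E\cap E_i}$ to $\overline{E\cap E_i}$, together with monotonicity of $\ubd^{m\alpha}$, handles that point more carefully but does not change the argument.
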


\begin{proof}
If $t> \pkd^{m\alpha} E$ we may cover $E$ by a countable collection of sets $E_i$, which we may take to be compact, such that $\ubd^{m\alpha} E_i <t$.  By Theorem \ref{liplem}
$$\ubd f( E_i)\  \leq \  \frac{1}{\alpha}\, \ubd^{m\alpha} E_i \ \leq\  \frac{t}{\alpha}, $$
for all $i$, so the conclusion follows from  \eqref{packdef}.
\end{proof}

For  packing dimension bounds in the opposite direction we need the following property.

\begin{prop}\label{goodsubset}
Let $E\subset \mathbb{R}^n$ be a Borel set such that $\pkd^s E>t$. Then there exists a non-empty compact $F\subset E$ such that $\pkd^s (F\cap U )>t $ for every open set $U$ such that $F\cap U \neq \emptyset$.
\end{prop}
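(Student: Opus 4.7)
The plan is to combine countable stability of $\pkd^s$ with a standard inner regularity argument for Borel sets and a ``hereditary kernel'' construction.

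First I would observe that $\pkd^s$ is countably stable: for any sequence of sets $\{A_i\}_{i=1}^\infty$, $\pkd^s(\bigcup_i A_i) = \sup_i \pkd^s A_i$. The non-trivial direction $\leq$ is immediate from the definition \eqref{packpro} by concatenating $\epsilon$-optimal compact covers of each $A_i$ and letting $\epsilon \searrow 0$.

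Next, since $E$ is Borel with $\pkd^s E>t$, writing $E = \bigcup_R(E\cap \overline{B(0,R)})$ and applying countable stability gives a bounded Borel piece with profile exceeding $t$; I would then invoke the standard inner regularity of packing dimension for Borel (analytic) sets to extract a compact $K \subset E$ with $\pkd^s K > t$.

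Fix a countable basis $\{U_i\}_{i=1}^\infty$ of open sets in $\mathbb{R}^n$ and set
$$I \ =\ \{i : \pkd^s(K\cap U_i) \leq t\},\qquad W \ =\ \bigcup_{i\in I} U_i.$$
Countable stability yields $\pkd^s(K\cap W) = \sup_{i\in I}\pkd^s(K\cap U_i) \leq t$. Define $F = K\setminus W$: since $W$ is open and $K$ compact, $F$ is compact, and $F\subset E$. From $K = F\cup(K\cap W)$ together with countable stability, $\pkd^s F = \pkd^s K > t$, so in particular $F$ is non-empty. If $U$ is any open set with $F\cap U\neq\emptyset$, pick a basic $U_j\subseteq U$ meeting $F$; then $U_j\not\subseteq W$, so $j\notin I$ and $\pkd^s(K\cap U_j) > t$. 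Writing $K\cap U_j = (F\cap U_j)\cup(K\cap W\cap U_j)$ with $\pkd^s(K\cap W\cap U_j)\leq t$, countable stability forces $\pkd^s(F\cap U_j) > t$, and monotonicity gives $\pkd^s(F\cap U) > t$.

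The main obstacle, such as it is, lies in the inner regularity step extracting a compact subset of large dimension profile from an arbitrary Borel set; once that is in hand, the hereditary-kernel construction obtained by excising the union of basic open sets of small profile is routine.
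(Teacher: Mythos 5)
Your ``hereditary kernel'' step is correct and is essentially the argument the paper gives for the compact case: remove from a compact $K$ the union $W$ of all basic open sets $V$ with $\pkd^s(K\cap V)\leq t$, use countable stability to see that $F=K\setminus W$ still has profile $>t$, and then localise. (The paper phrases the last step as a contradiction, you phrase it directly via $K\cap U_j\subset (F\cap U_j)\cup(K\cap W)$; both are fine.) The countable stability of $\pkd^s$, which you use repeatedly, does follow from the definition \eqref{packpro} by concatenating near-optimal covers, as you say.

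The genuine gap is the step where you ``invoke the standard inner regularity of packing dimension for Borel (analytic) sets to extract a compact $K\subset E$ with $\pkd^s K>t$.'' The standard inner regularity results are for the packing dimension $\pkd$, not for the profile $\pkd^s$, and they do not transfer formally: $\pkd^s$ is defined through covers by compact sets whose \emph{upper box dimension profiles} (i.e.\ capacities with respect to the kernel $\phi_r^s$) are controlled, and nothing in that definition tells you that a Borel set of profile $>t$ contains a compact subset of profile $>t$. This is exactly the point the paper singles out as non-trivial: the author states he knows no direct proof from \eqref{packpro}, and instead appeals to Howroyd's Theorem 22, which constructs a packing-type \emph{measure} associated with the $s$-profile (playing the role that packing measure plays for $\pkd$) and then runs a ``subset of finite measure'' argument to produce the compact $E'\subset E$. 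So your proposal correctly isolates where the difficulty lies but then assumes it away by citing a theorem that exists for $\pkd$ but not, off the shelf, for $\pkd^s$; to close the gap you would need either to cite Howroyd's result explicitly or to rebuild the Joyce--Preiss-type measure argument for the profile kernels. (Your preliminary reduction to a bounded piece via $E=\bigcup_R\bigl(E\cap \overline{B(0,R)}\bigr)$ is harmless but does not help here, since the obstacle is passing from Borel to compact, not from unbounded to bounded.)
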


\noindent{\it Note on proof.}
In the special case where $E$ is compact there is a  short proof is based on \cite[Lemma 2.8.1]{BP}.
Let ${\mathcal B}$ be a countable basis of open sets that intersect $E$. Let 
$$ F\ =\ E\setminus \bigcup\big\{V \in {\mathcal B}: \pkd^s (E\cap V)\leq t\big\}.$$
Then $F$ is compact and, since  $ \pkd^s$ is countably stable, $ \pkd^s F >t$ and furthermore $ \pkd^s (E\setminus F) \leq t$.

Suppose for a contradiction that  $U$ is an open set such that $F\cap U \neq \emptyset$ and  $\pkd^s (F\cap U )\leq t $. As $ {\mathcal B}$ is a basis of open sets we may find  $V\subset U$ with $V \in {\mathcal B}$ such that $F\cap V \neq \emptyset$ and  $\pkd^s (F\cap V )\leq t$. Then
$$\pkd^s (E\cap V )\ \leq\ \max\{\pkd^s (E\setminus F ), \pkd^s (F\cap V )\} \ \leq t,$$
which contradicts that $V \in {\mathcal B}$.

For a general Borel set $E$ with $\pkd^s E>t$ we need to find a compact subset $E'\subset E$ with $\pkd^s E'>t$ which then has a suitable subset as above.
Whilst this is intuitively natural, I am not aware of a simple direct proof from the definition \eqref{packpro} of packing dimension profiles in terms of  box dimension profiles. The proof in \cite[Theorem 22]{How} uses a packing-type measure which relates to the $s$-packing dimension profile  in the same way that packing measure relates to packing dimension. This measure on $E$ is infinite, and by a `subset of finite measure' argument $E$ has a compact subset $E'$ of positive finite measure. \hfill$\Box$
\medskip

Assuming Proposition \ref{goodsubset} the packing dimension analogue of Theorem \ref{mainnew} follows easily.

\begin{cor}\label{packcor2}
Let $E\subset \mathbb{R}^n$ be a Borel set and let $s, \gamma>0$. Let $\{f_\omega: E \to \mathbb{R}^m,\ \omega \in \Omega\}$ be a family of continuous mappings such that for some $c>0$
$$\P(|\fw(x) - \fw(y)|\leq r)\ \leq\ c\,\phi_{r^\gamma}^s (x-y)
\qquad (x,y \in E, \ r>0).
$$
Then, for $\P$-almost all $\omega\in \Omega$, 
$$\pkd \fw( E)\  \geq \  \gamma\, \pkd^{s} E. $$
\end{cor}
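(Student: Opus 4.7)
The plan is to combine Proposition \ref{goodsubset} with Theorem \ref{mainnew}, via a Baire category argument applied over a countable basis of open sets. Fix any $t < \pkd^s E$; by Proposition \ref{goodsubset} choose a compact $F \subset E$ such that $\pkd^s(F \cap U) > t$ for every open $U \subset \mathbb{R}^n$ with $F \cap U \neq \emptyset$. By monotonicity of $\pkd$ it is enough to show $\pkd f_\omega(F) \geq \gamma t$ almost surely, and then let $t$ run through a countable sequence increasing to $\pkd^s E$ to conclude.

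Fix a countable basis $\{B_n\}$ of open subsets of $\mathbb{R}^n$, and for each $n$ with $F \cap B_n \neq \emptyset$ set $K_n := F \cap \overline{B_n}$, which is compact. Monotonicity of $\pkd^s$ (from \eqref{packpro}) gives $\pkd^s K_n \geq \pkd^s(F \cap B_n) > t$, and taking $\{K_n\}$ itself as a one-element cover in \eqref{packpro} forces $\ubd^s K_n \geq \pkd^s K_n > t$. Since the hypothesis \eqref{probbound} on $E$ restricts to $K_n \subset E$, Theorem \ref{mainnew} applied to each $K_n$ produces a full-measure event $\Omega_n$ on which $\ubd f_\omega(K_n) \geq \gamma \ubd^s K_n > \gamma t$; set $\Omega_0 := \bigcap_n \Omega_n$, still of probability one.

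Now fix $\omega \in \Omega_0$ and take any cover $f_\omega(F) \subset \bigcup_i E'_i$ with $E'_i$ compact. The closed sets $F_i := F \cap f_\omega^{-1}(E'_i)$ cover the compact (hence Baire) space $F$, so some $F_{i_0}$ has non-empty interior in $F$: there is an open $U$ with $\emptyset \neq F \cap U \subset F_{i_0}$. Choose a basis element $B_n$ with $\overline{B_n} \subset U$ and $F \cap B_n \neq \emptyset$; then $K_n \subset F_{i_0}$, so $f_\omega(K_n) \subset E'_{i_0}$ and
$$\ubd E'_{i_0} \ \geq\ \ubd f_\omega(K_n) \ > \ \gamma t.$$
Hence $\sup_i \ubd E'_i > \gamma t$ for every such cover, so $\pkd f_\omega(F) \geq \gamma t$ by \eqref{packpro}, as required.

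The main obstacle is the last step: converting the purely topological "large image" output of Baire's theorem into a quantitative lower bound on some $\ubd E'_{i_0}$. The countable basis is what makes this work — it replaces the uncountable family of possible relative neighbourhoods of $F$ by the fixed countable list $\{K_n\}$, so that Theorem \ref{mainnew} delivers a single full-measure event $\Omega_0$ on which the required bound holds for every $K_n$ simultaneously, independently of the (possibly $\omega$-dependent) cover $\{E'_i\}$ produced by the definition of packing dimension. The remaining ingredients — monotonicity of $\pkd$ and $\pkd^s$, the elementary inequality $\pkd^s K \leq \ubd^s K$ for compact $K$, and continuity of $f_\omega$ — are all immediate from the definitions.
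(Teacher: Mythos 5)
Your proof is correct and follows essentially the same route as the paper: Proposition \ref{goodsubset} to extract the compact set $F$, Theorem \ref{mainnew} applied simultaneously to the countably many sets $F\cap\overline{B_n}$, and a Baire category argument against an arbitrary countable compact cover. The only (cosmetic) difference is that you apply Baire's theorem to $F$ by pulling the cover back through $f_\omega$, while the paper applies it to $f_\omega(F)$ directly; your variant is if anything slightly cleaner, since the inclusion $K_n\subset F_{i_0}$ is immediate once $\overline{B_n}\subset U$.
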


\begin{proof}
Let $t< \pkd^s E$. By Proposition \ref{goodsubset} we may find a  non-empty compact $F\subset E$ such that for every open $U$ that intersects $F$,  $\pkd^s (F\cap U )>t $,  so in particular $\ubd^s (F\cap {\overline U} )>t $. As  $\mathbb{R}^n$ is seperable, there is a countable basis $\{U_i\}_{i=1}^\infty$ of open sets that intersect $F$.  By Theorem \ref{mainnew}, for $\P$-almost all $\omega\in \Omega$, 
\be\label{uibound}
 \ubd \fw(F\cap {\overline U}_i)\  \geq \   \gamma\, \ubd^{s}  (F\cap {\overline U}_i) >\gamma t
 \ee
for each $i$, and thus for all $i$ simultaneously.

For such an $\omega$, let $\{K_j\}_{j=1}^\infty$ be a cover of the compact set $\fw(F)$ by a countable collection of compact sets. By Baire's category theorem there is a $k$ and an open $V$ such that $\emptyset \neq \fw(F)\cap V\subset \fw(F)\cap K_k$. There is some $U_i$ such that $\fw(F \cap U_i)\subset \fw(F)\cap V$, so in particular 
$$\ubd (\fw(F)\cap K_k)\  \geq\ubd (\fw(F)\cap {\overline V})\  \geq \ \ubd \fw(F \cap {\overline U}_i)\  \geq \  \gamma t$$
by \eqref{uibound}. Since there is such a  $K_k$ for every cover of $\fw(F)$ by a countable collection of compact sets $\{K_j\}_{j=1}^\infty$, we conclude that $\pkd \fw(E)\geq \pkd \fw(F)\geq \gamma t$ by \eqref{packdef}.
\end{proof}

Corollaries \ref{packcor1} and \ref{packcor2} can be applied in exactly the same way as Theorems \ref{liplem} and \ref{mainnew} to  obtain, for example, packing dimension properties of projections and random images. We just state the basic projection result.  

\begin{theo}
Let $E\subset \mathbb{R}^n$ be Borel. Then  for all $V\in G(n,m)$
$$
\pkd \pi_V E \  \leq\   \pkd^m E,
$$
and for $\gamma_{n,m}$-almost all $V\in G(n,m)$
$$
\pkd \pi_V E \  \geq\   \pkd^m E.
$$
\end{theo}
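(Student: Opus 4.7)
The strategy is to follow the template of Theorem \ref{main} but with the box-dimension tools replaced by their packing-dimension counterparts established in Corollaries \ref{packcor1} and \ref{packcor2}. Both the upper and lower bounds then reduce to straightforward applications of results already in hand.

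For the upper bound, I would identify $V$ with $\mathbb{R}^m$ in the usual way so that $\pi_V:\mathbb{R}^n\to V$ is Lipschitz, that is, $\alpha$-H\"{o}lder with $\alpha=1$ and constant $c=1$. Corollary \ref{packcor1} then gives
$$\pkd \pi_V E \ \leq\ \frac{1}{1}\,\pkd^{m\cdot 1} E \ =\ \pkd^m E$$
for every $V\in G(n,m)$, with no exceptional set at all.

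For the lower bound I would reuse the geometric estimate \eqref{equiveqn1}, namely
$$\gamma_{n,m}\big\{V: |\pi_V x - \pi_V y|\leq r\big\}\ \leq\ a_{n,m}\,\phi_r^m(x-y) \qquad (x,y\in\mathbb{R}^n),$$
which is exactly hypothesis \eqref{probbound} of Corollary \ref{packcor2} in the setting $\Omega=G(n,m)$, $\P=\gamma_{n,m}$, $f_\omega = \pi_V$, $s=m$ and $\gamma=1$. The corollary then yields $\pkd \pi_V E \geq \pkd^m E$ for $\gamma_{n,m}$-almost all $V\in G(n,m)$, as required.

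Since both ingredients are already packaged as corollaries and the geometric estimate was set up precisely for this purpose in Section \ref{sec2.4}, there is essentially no obstacle left in the theorem itself; all the real work has been absorbed into Proposition \ref{goodsubset}, whose proof (passing from a general Borel $E$ to a compact subset with full packing-dimension profile via an auxiliary packing-type measure) is the only delicate point in the chain. Everything else is a verbatim transcription of the box-dimension argument of Theorem \ref{main}.
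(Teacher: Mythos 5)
Your proposal is correct and follows exactly the paper's own proof: the upper bound is Corollary \ref{packcor1} applied to the Lipschitz map $\pi_V$ with $\alpha=1$, and the lower bound is Corollary \ref{packcor2} applied with $\Omega=G(n,m)$, $\P=\gamma_{n,m}$, $s=m$, $\gamma=1$ via the estimate \eqref{equiveqn1}. Your closing remark that the real work is hidden in Proposition \ref{goodsubset} is also an accurate reading of where the difficulty lies.
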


\begin{proof}
The upper bound follows from Corollary \ref{packcor1} noting that $\pi_V: \mathbb{R}^n \to V$ is Lipschitz for all $V\in G(n,m)$.

As in Theorem \ref{mainnew}
$$\gamma_{n,m}\big\{V: |\pi_Vx-\pi_Vy| \leq r\big\}\ \leq\ a_{n,m}\,\phi_r^m (x-y)
\qquad (x\in \rn)$$
so taking $f_\omega$ as $\pi_V$ with $\gamma=1$ and $\P$ as $\gamma_{n,m}$ in Corollary \ref{packcor2} gives the almost sure lower bound.
\end{proof}

\bibliographystyle{plain}

\bigskip
\end{document}